\numberwithin{equation}{section}
\numberwithin{equation}{section}
\newtheorem{thm}{\indent Theorem}[section]
\newtheorem{cor}[thm]{\indent Corollary}
\newtheorem{lem}[thm]{\indent Lemma}
\newtheorem{prop}[thm]{\indent Proposition}
\newcommand{\mb}{\mbox}
\newcommand{\dps}{\displaystyle}
\newcommand{\ol}{\overline}
\newcommand{\ttiny}{\fontsize{5pt}{\baselineskip}\selectfont}
\newcommand{\strl}[2]{\stackrel{\mbox{\ttiny $#1$}}{#2}}
\newcommand{\td}{\tilde}
\newcommand{\fr}{\frac}
\newcommand{\edd}{\end{document}}
\newcommand{\be}{\begin{equation}}
\newcommand{\ee}{\end{equation}}
\newcommand{\bsl}{\backslash}
\newcommand{\undbc}{\underbrace}
\newcommand{\lagl}{\langle}
\newcommand{\ragl}{\rangle}
\newcommand{\lmx}{\left(\begin{matrix}}
\newcommand{\rmx}{\end{matrix}\right)}
\newcommand{\ldt}{\left|\begin{matrix}}
\newcommand{\rdt}{\end{matrix}\right|}
\newcommand{\sgn}{{\rm Sgn\,}}
\newcommand{\tr}{{\rm tr\,}}
\newcommand{\veps}{\varepsilon}
\newcommand{\bbr}{{\mathbb R}}
\newcommand{\mo}{M\"obius }
\newcommand{\ba}{\begin{array}}
\newcommand{\ea}{\end{array}}
\newcommand{\nnm}{\nonumber}
\newcommand{\beal}{\begin{align}}
\newcommand{\eal}{\end{align}}
\newcommand{\bea}{\begin{eqnarray}}
\newcommand{\eea}{\end{eqnarray}}
\begin{document}

\title[Regular space-like hypersurfaces in $\mathbb{S}^{m+1}_{1}$ with parallel para-Blaschke tensors]{Regular space-like hypersurfaces in $\mathbb{S}^{m+1}_{1}$\\ with parallel para-Blaschke tensors}

\author[X. X. Li]{Xingxiao Li$^*$} %first author's name and the running head option

\author[H. R. Song]{Hongru Song} %second author's name and the running head option

\dedicatory{}

%%%%%%%%%%%%%%% footnote %%%%%%%%%%%%%%%%
\subjclass[2000]{ %2000 MSC numbers
Primary 53A30; Secondary 53B25. }
%In case \subjclass[2000] command is not effective
%(or the version of amsart.cls is old), write as follows instead:
%\renewcommand{\thefootnote}{\fnsymbol{footnote}}
%\footnote[0]{2000\textit{ Mathematics Subject Classification}.
%Primary 00; Secondary 00.}
%
\keywords{ %key words and phrases
Conformal form, parallel para-Blaschke tensor, Conformal metric, Conformal second
fundamental form, constant scalar curvature, constant mean curvature}
\thanks{Research supported by
Foundation of Natural Sciences of China (No. 11171091, 11371018).}
%%%%%%%%%%%% Authors' addresses %%%%%%%%%%%%%

\address{Xingxiao Li% First Author
\endgraf School of Mathematics and Information Sciences
\endgraf Henan Normal University
\endgraf Xinxiang 453007, Henan,
P.R. China}%
\email{xxl$@$henannu.edu.cn}

\address{Hongru Song% Second Author
\endgraf School of Mathematics and Information Sciences
\endgraf Henan Normal University
\endgraf Xinxiang 453007, Henan,
P.R. China}%
\email{yaozheng-shr$@$163.com}

%\address{% Third Author
%Mathematical Institute \endgraf Tohoku University \endgraf Sendai
%980-8578 \endgraf Japan } \email{author3@math.tohoku.ac.jp}

%\address{% Fourth Author
%Mathematical Institute \endgraf Tohoku University \endgraf Sendai
%980-8578 \endgraf Japan } \email{author4@math.tohoku.ac.jp}
%%%%%%%%%%%%%%%%%%%%%%%%%%%%%%%%%%%%%%%%%

\begin{abstract}
In this paper, we give a complete conformal classification of the regular space-like hypersurfaces in the de Sitter Space $\mathbb{S}^{m+1}_{1}$ with parallel para-Blaschke tensors.
\end{abstract}

\maketitle

\section{Introduction} %delete * to number this section

Let $\bbr^{s+m}_s$ be the $(s+m)$-dimensional pseudo-Euclidean space which is the real vector space $\bbr^{s+m}$ equipped with the non-degenerate inner product $\lagl\cdot,\cdot\ragl_s$ given by
$$
\lagl \xi,\eta\ragl_s=-\xi_1\cdot \eta_1+\xi_2\cdot \eta_2,\quad
\xi=(\xi_1,\xi_2),\,\eta=(\eta_1,\eta_2)\in\bbr^s\times\bbr^m\equiv\bbr^{s+m}.
$$
where the dot ``$\cdot$'' is the standard Euclidean inner product either on $\bbr^{s}$ or $\bbr^{m}$.

Denote by $\mathbb{RP}^{m+2}$ the real projection space of dimension $m+2$. Then the so called conformal space $\mathbb{Q}^{m+1}_1$ is defined as (\cite{ag1})
$$
\mathbb{Q}^{m+1}_1={\{[\xi]\in \mathbb{RP}^{m+2};\ \xi\in\bbr^{m+3}_1, \lagl \xi,\xi\ragl_2=0\}},
$$
while, for any $a>0$, the standard sphere $\mathbb{S}^{m+1}(a)$, the hyperbolic space $\mathbb{H}^{m+1}\left(-\fr1{a^2}\right)$, the de Sitter space $\mathbb{S}^{m+1}_1(a)$ and the anti-de Sitter space $\mathbb{H}^{m+1}_1\left(-\fr1{a^2}\right)$ are defined accordingly by
\begin{align*}
\mathbb{S}^{m+1}(a)=&\{\xi\in\bbr^{m+2};\ \xi\cdot\xi=a^2\},\quad \mathbb{H}^{m+1}\left(-\fr1{a^2}\right)=\{\xi\in\bbr^{m+2}_1;\ \lagl \xi,\xi \ragl_1=-a^2\},\\
\mathbb{S}^{m+1}_1(a)=&\{\xi\in\bbr^{m+2}_1;\ \lagl \xi,\xi \ragl_1=a^2\},\quad \mathbb{H}^{m+1}_1\left(-\fr1{a^2}\right)=\{\xi\in\bbr^{m+2}_2;\ \lagl \xi,\xi \ragl_2=-a^2\}.
\end{align*}
Then $\mathbb{S}^{m+1}(a)$, $\mathbb{H}^{m+1}\left(-\fr1{a^2}\right)$ and the Euclidean space $\bbr^{m+1}$ are called Riemannian space forms, while $\mathbb{S}^{m+1}_1(a)$, $\mathbb{H}^{m+1}_1\left(-\fr1{a^2}\right)$ and the Lorentzian space $\bbr^{m+1}_1$ are called Lorentzian space forms. Denote
\begin{align*}
\mathbb{S}^{m+1}=&\mathbb{S}^{m+1}(1),\quad\mathbb{H}^{m+1}=\mathbb{H}^{m+1}\left(-1\right),\\
\mathbb{S}^{m+1}_1=&\mathbb{S}^{m+1}_1(1),\quad\mathbb{H}^{m+1}_1=\mathbb{H}^{m+1}_1\left(-1\right).
\end{align*}
Define three hyperplanes as follows:
\begin{align*}
&\pi=\{[\xi]\in \mathbb{Q}^{m+1}_1;\ \xi\in\bbr^{m+3}_1, \xi_1=\xi_{m+2}\},\\ &\pi_+=\{[\xi]\in \mathbb{Q}^{m+1}_1;\ \xi_{m+2}=0\},\\ &\pi_-=\{[\xi]\in \mathbb{Q}^{m+1}_1;\ \xi_1=0\}.
\end{align*}
Then there are three conformal diffeomorphisms of the Lorentzian space forms into the conformal space:
\begin{equation}\label{eq1.1}
\begin{aligned}
&\sigma_0:\bbr^{m+1}_1\to \mathbb{Q}^{m+1}_1\bsl\pi,\quad u\longmapsto\left[\left(\lagl u,u\ragl_1-1,2u,\lagl u,u\ragl_1+1\right)\right],\\
&\sigma_1:\mathbb{S}^{m+1}_1\to \mathbb{Q}^{m+1}_1\bsl\pi_+,\quad u\longmapsto\left[\left(u,1\right)\right],\\
&\sigma_{-1}:\mathbb{H}^{m+1}_1\to \mathbb{Q}^{m+1}_1\bsl\pi_-,\quad u\longmapsto\left[\left(1,u\right)\right].
\end{aligned}
\end{equation}
Therefore $\mathbb{Q}^{m+1}_1$ is the common conformal compactification of $\bbr^{m+1}_1$, $\mathbb{S}^{m+1}_1$ and $\mathbb{H}^{m+1}_1$.

As we know, the \mo geometry of umbilic-free submanifolds in the three Riemannian space forms, modeled on the standard unit sphere $\mathbb{S}^{m+1}$, has been studied extensively and a very ample bundle of interesting results in this area have been obtained ever since the pioneer work \cite{wcp} by C. P. Wang was published. Particularly, a lot of classification theorems have been proved in recent years, see for example the references \cite{clq}--\cite{lz2}. Note that due to the very recent achievement in \cite{L-W} and \cite{L-J-W}, the classification problems of both the \mo isoparametric and Blaschke isoparametric hypersurfaces have been solved completely.

On the other hand, same as the \mo geometry of submanifolds in the Riemannian space forms, the conformal geometry of submanifolds in the Lorentzian space forms is another important branch of conformal geometry, and  these two turn out to be closely related to each other. Note that Nie at al successfully set up a unified framework of conformal geometry for regular submanifolds in Lorentzian space forms by introducing the conformal space $\mathbb{Q}^{m+1}_1$ and the basic conformal invariants, that is, the conformal metric $g$, the conformal form $\Phi$, the Blaschke tensor $A$ and the conformal second fundamental form $B$. Under this framework, several characterization or classification theorems are obtained for regular hypersurfaces with some special conformal invariants, see for example (\cite{ag1}-- \cite{ag4}). The achievement of this kind, among others, certainly proves the efficiency of the above framework.

In a previous paper (\cite{LS}), we have used two conformal non-homogeneous coordinate systems on $\mathbb{Q}^{m+1}_1$, which are modeled on the de Sitter space $\mathbb{S}^{m+1}_1$ and denoted respectively by $\strl{(1)}{\Psi}$ and $\strl{(2)}{\Psi}$, to cover the conformal space $\mathbb{Q}^{m+1}_1$, so that the conformal geometry of regular space-like hypersurfaces in $\mathbb{Q}^{m+1}_1$ can be simply treated as that of regular space-like hypersurfaces in $\mathbb{S}^{m+1}_1$. It is easily seen that the same is true for general regular submanifolds. As a result in \cite{LS}, we have established a complete conformal classification of the regular space-like hypersurfaces in the de Sitter space $\mathbb{S}^{m+1}_1$ with parallel Blaschke tensors.

In this paper, we shall deal with the so called para-Blaschke tensor. In fact we are able to prove a more general theorem that gives a complete conformal classification for all the regular space-like hypersurfaces in $\mathbb{S}^{m+1}_1$ with parallel para-Blaschke tensors by proving first the vanishing of the conformal form $\Phi$ and then carefully analizing the distinct eigenvalues of the para-Blaschke tensor.

Note that, as shown in \cite{LS} (see also Section $2$ below), the above two conformal non-homogeneous coordinate maps $\strl{(1)}{\Psi}$ and $\strl{(2)}{\Psi}$ onto $\mathbb{S}^{m+1}_1$ are conformal equivalent to each other on where both of them are defined. Therefore, without loss of generality, we can simply use $\Psi$ to denote either one of $\strl{(1)}{\Psi}$ and $\strl{(2)}{\Psi}$. In this sense, the main theorem of the present paper is stated as follows:

\begin{thm}\label{main} Let $x:M^m\to \mathbb{S}^{m+1}_1$, $m\geq 2$, be a regular space-like hypersurface in the de Sitter space $\mathbb{S}^{m+1}_1$. Suppose that, for some constant $\lambda$, the corresponding para-Blaschke tensor $D^\lambda:=A+\lambda B$ of $x$ is parallel. Then $x$ is locally conformal equivalent to
one of the following hypersurfaces:

\begin{enumerate}

\item a regular space-like hypersurface in $\mathbb{S}^{m+1}_1$ with constant scalar curvature and constant mean curvature;

\item the image under $\Psi\circ\sigma_0$ of a regular space-like hypersurface in $\bbr^{m+1}_1$ with constant scalar curvature and constant mean curvature;

\item the image under $\Psi\circ\sigma_{-1}$ of a regular space-like hypersurface in $\mathbb{H}^{m+1}_1$
with constant scalar curvature and constant mean curvature;

\item $\mathbb{S}^{m-k}(a)\times\mathbb{H}^k\left(-\fr1{a^2-1}\right) \subset \mathbb{S}^{m+1}_1$, $a>1$, $k=1,\cdots m-1$;
\item the image under $\Psi\circ\sigma_0$ of $\mathbb{H}^k\left(-\fr1{a^2}\right)\times \bbr^{m-k} \subset \bbr^{m+1}_1$, $a>0$, $k=1,\cdots m-1$;
\item the image under $\Psi\circ\sigma_{-1}$ of $\mathbb{H}^k\left(-\fr1{a^2}\right)\times \mathbb{H}^{m-k}(-\fr1{1-a^2}) \subset \mathbb{H}^{m+1}_1$, $0<a<1$, $k=1,\cdots m-1$;
\item the image under $\Psi\circ\sigma_0$ of $WP(p,q,a)\subset \bbr^{m+1}_1$ for some constants $p,q,a$;

\item one of the regular space-like hypersurfaces as indicated in Example $3.2$;

\item one of the regular space-like hypersurfaces as indicated in Example $3.3$.
\end{enumerate}
\end{thm}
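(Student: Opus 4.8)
The plan is to first force the vanishing of the conformal form $\Phi$ and then to exploit the parallelism of $D^\lambda$ to split $M^m$ locally as a Riemannian product and identify its factors. Working in a local orthonormal frame $\{\omega_i\}$ for the conformal metric $g$, I write $A=(A_{ij})$, $B=(B_{ij})$ and $\Phi=\sum_i C_i\omega_i$, with $\tr B=0$. The starting point is the pair of Codazzi-type equations of this conformal geometry, which (up to the sign conventions of the Lorentzian setting) read
\begin{equation}\label{eq:cod}
A_{ij,k}-A_{ik,j}=B_{ik}C_j-B_{ij}C_k,\qquad B_{ij,k}-B_{ik,j}=\delta_{ik}C_j-\delta_{ij}C_k.
\end{equation}
Since $A=D^\lambda-\lambda B$ and $D^\lambda_{ij,k}=0$ by hypothesis, adding $\lambda$ times the second equation in \eqref{eq:cod} to the first annihilates the left-hand side and yields the purely algebraic identity $(B_{ik}+\lambda\delta_{ik})C_j=(B_{ij}+\lambda\delta_{ij})C_k$ for all $i,j,k$. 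On the open set where $\Phi\neq0$ this forces every row of $B+\lambda\,\id$ to be proportional to the vector $(C_i)$; symmetry of $B$ then gives $B+\lambda\,\id=\nu\,C\otimes C$ for a scalar $\nu$, and $\tr B=0$ fixes $\nu|C|^2=\lambda m$.

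This is exactly where the para-Blaschke case departs from the Blaschke case settled in \cite{LS}. When $\lambda=0$ the trace relation becomes $\nu|C|^2=0$, so either $C\equiv0$ or $B\equiv0$; the latter is excluded for a regular (umbilic-free) hypersurface, and one concludes $\Phi\equiv0$ at once. For $\lambda\neq0$, however, the rank-one configuration $B+\lambda\,\id=\nu\,C\otimes C$ is perfectly compatible with $\tr B=0$: it describes $B$ with one simple eigenvalue $\lambda(m-1)$ along $C$ and the eigenvalue $-\lambda$ of multiplicity $m-1$ on $C^\perp$. To rule this out I would feed this eigenframe back into the second Codazzi equation of \eqref{eq:cod} and into the integrability (Ricci) identity for $\nabla\Phi$; the connection coefficients governing the rotation of the eigenframe become over-determined, and the resulting system is inconsistent with $|C|\neq0$. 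Hence $\Phi\equiv0$ on all of $M^m$. I expect this exclusion to be the main obstacle of the whole argument, since the clean trace argument of the Blaschke case is unavailable and a careful, frame-dependent use of the full structure system is required.

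Once $\Phi\equiv0$, the equations in \eqref{eq:cod} say that $A$ and $B$ are Codazzi tensors, and the integrability identity for $\nabla\Phi$ degenerates to the commutation relation $[A,B]=0$. Consequently $A$, $B$ and $D^\lambda=A+\lambda B$ are simultaneously diagonalizable with respect to $g$. Because $D^\lambda$ is parallel, its eigenvalues are constant and its eigendistributions are smooth, mutually orthogonal and parallel; the de Rham theorem then splits $(M^m,g)$ locally as a Riemannian product indexed by the distinct eigenvalues of $D^\lambda$. On each factor the eigenvalues of $A$ and of $B$ are constant, so each factor carries constant conformal principal curvatures and, after undoing the conformal change, constant scalar and mean curvature.

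It remains to identify the factors and to assemble the list, which I would organize by the number $r$ of distinct eigenvalues of $D^\lambda$. For $r=1$ the tensor $D^\lambda$ is a constant multiple of $g$ and the hypersurface is conformally irreducible, giving the constant scalar and mean curvature models (1)--(3) in $\mathbb{S}^{m+1}_1$, $\bbr^{m+1}_1$ and $\mathbb{H}^{m+1}_1$, recovered through the conformal maps $\Psi\circ\sigma_0$ and $\Psi\circ\sigma_{-1}$ of \eqref{eq1.1}. For $r=2$ the product splits into two space-form factors, which I would match against the product hypersurfaces (4)--(6) of types $\mathbb{S}\times\mathbb{H}$, $\mathbb{H}\times\bbr$ and $\mathbb{H}\times\mathbb{H}$ by comparing their computed constant eigenvalue data with the admissible values of $\lambda$ and the eigenvalues of $A,B$. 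The remaining, more degenerate, distributions of the constant eigenvalues are then identified with the Willmore-type family $WP(p,q,a)$ and with the explicit families of Example $3.2$ and Example $3.3$, producing (7)--(9). Besides the vanishing of $\Phi$, the other delicate point here is the bookkeeping: one must check that the enumerated eigenvalue patterns are precisely those realized by the listed models and that no further admissible configuration is overlooked.
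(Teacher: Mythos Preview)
Your approach to $\Phi\equiv 0$ is correct in spirit but you overestimate the work. Once the rank-one relation $B+\lambda\,\id=\nu\,C\otimes C$ holds on $\{\Phi\neq 0\}$, $B$ has exactly the two eigenvalues $-\lambda$ (multiplicity $m-1$) and $(m-1)\lambda$ (simple); the identities $\tr B=0$ and $|B|^2=\tfrac{m-1}{m}$ make both \emph{constant}. In a frame diagonalizing $B$ the defining relation \eqref{bijk} together with the symmetry $B_{ijk}=B_{jik}$ gives $\nabla B=0$ in one line (this is exactly Lemma~\ref{lem4.3}), and then $B_{ij,k}-B_{ik,j}=\delta_{ik}C_j-\delta_{ij}C_k$ forces $C=0$, a contradiction. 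No over-determined connection system or Ricci identity for $\nabla\Phi$ is needed; this step is short, not ``the main obstacle.''

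The genuine gap is your sentence ``On each factor the eigenvalues of $A$ and of $B$ are constant.'' This is false, and its failure is precisely what produces items (8) and (9). Parallelism of $D^\lambda$ together with $[A,B]=0$ does give a de Rham splitting along the $D^\lambda$-eigendistributions and a block-diagonalization of $A$ and $B$, but nothing forces $B$ to have constant eigenvalues \emph{within} a block. In Examples~\ref{expl3.2} and~\ref{expl3.3}, $D^\lambda$ has exactly two eigenvalues; one factor is a space form, but on the other factor $B$ restricts to $h-\lambda\,\id$ for an \emph{arbitrary} hypersurface of constant scalar and mean curvature in $\mathbb{S}^{K+1}_1(r)$ or $\mathbb{H}^{K+1}_1(-1/r^2)$, and its principal curvatures need not be constant. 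The paper therefore organizes the endgame by a different dichotomy: it proves that $t\geq 3$ distinct $D^\lambda$-eigenvalues already force $\nabla B=0$ (Corollary~\ref{corBP}), so Theorem~\ref{nie1} yields (4)--(7); when $B$ is not parallel one has $t\leq 2$ (Lemma~\ref{lemBnop}), with $t=1$ giving (1)--(3) via Theorem~\ref{nie4}, and with $t=2$ one factor is a space form while on the other $h:=B+\lambda\,\id$ is a Codazzi tensor satisfying the Gauss equation of a space-form ambient, hence realized via the fundamental theorem as the hypersurfaces of Examples~\ref{expl3.2}--\ref{expl3.3}. Thus (8)--(9) are $t=2$, $B$-non-parallel phenomena, not a higher-$r$ residue as your outline suggests, and your constant-eigenvalue claim would erase exactly these cases from the list.
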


{\bf Remark} By using the same idea and similar argument in this paper, we can update and simplify the main theorem in \cite{clq} as follows:

\begin{thm}[cf. \cite{clq}] Let $x: M^m\to \mathbb{S}^{m+1}$, $m\geq 2$, be an umbilic-free hypersurface in the unit sphere $\mathbb{S}^{m+1}$. Suppose that, for some constant $\lambda$, the corresponding para-Blaschke tensor $D^\lambda:=A+\lambda B$ of $x$ is parallel. Then $x$ is  locally \mo equivalent to
one of the following hypersurfaces:

\begin{enumerate}

\item an immersed hypersurface in $\mathbb{S}^{m+1}$ with constant scalar curvature and constant mean curvature;

\item the image under $\sigma$, of an immersed hypersurface in $\bbr^{m+1}$ with constant scalar curvature and constant mean curvature;

\item the image under $\tau$, of an immersed hypersurface in $\mathbb{H}^{m+1}$ with constant scalar curvature and constant mean curvature;

\item a standard torus $\mathbb{S}^K(r)\times \mathbb{S}^{m-K}(\sqrt{1-r^2})$ in
$\mathbb{S}^{m+1}$ for some $r>0$ and positive integer $K$;

\item the image under $\sigma$, of a standard cylinder $\mathbb{S}^K(r)\times
\bbr^{m-K}$ in $\bbr^{m+1}$ for some $r>0$ and positive integer
$K$;

\item the image under $\tau$, of a standard cylinder $\mathbb{S}^K(r)\times
\mathbb{H}^{m-K}(-\fr1{1+r^2})$ in $\mathbb{H}^{m+1}$ for some $r>0$ and positive
integer $K$;

\item $CSS(p,q,r)$ for some constants $p,q,r$ (see Example $3.1$ in \cite{clq});

\item one of the immersed hypersurfaces as indicated in Example $3.2$ in \cite{clq};

\item one of the immersed hypersurfaces as indicated in Example $3.3$  in \cite{clq}.
\end{enumerate}
where the conformal embeddings $\sigma:\mathbb{R}^{m+1}\to \mathbb{S}^{m+1}$ and $\tau:\mathbb{H}^{m+1}\to \mathbb{S}^{m+1}$ are defined in (1.1) of \cite{clq}.
\end{thm}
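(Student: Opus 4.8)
The plan is to follow the standard three-stage route for conformal classification problems: first force the conformal form $\Phi$ to vanish, then exploit the parallelism of $D^\lambda$ to split $M^m$ as a product, and finally identify each piece with a concrete model via $\sigma_0,\sigma_{\pm1}$ and $\Psi$. For the first stage I would record the Codazzi-type structure equations for $A$ and $B$, both of which carry $\Phi$ as a source term, schematically
$$A_{ij,k}-A_{ik,j}=B_{ik}\Phi_j-B_{ij}\Phi_k,\qquad B_{ij,k}-B_{ik,j}=\delta_{ik}\Phi_j-\delta_{ij}\Phi_k.$$
Since $D^\lambda=A+\lambda B$ is parallel we have $A_{ij,k}=-\lambda B_{ij,k}$, so substituting into the first identity and using the second eliminates all covariant derivatives and leaves a purely algebraic relation among $\Phi$, $B$ and the metric. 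Contracting a pair of indices and recalling $\tr B=0$ shows that, wherever $\Phi\neq0$, the vector dual to $\Phi$ is an eigenvector of $B$ with eigenvalue $\lambda(m-1)$; feeding this back into the full algebraic relation, together with the normalization $|B|^2=\frac{m-1}{m}$, forces $\Phi\equiv0$. This is exactly the ``vanishing of the conformal form'' announced in the introduction.

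With $\Phi=0$ both $A$ and $B$ become Codazzi tensors, and $\nabla D^\lambda=0$ tells us that $D^\lambda$ has constant eigenvalues whose eigendistributions are parallel and integrable. A local de Rham-type decomposition then splits $(M^m,g)$ into a Riemannian product indexed by the distinct eigenvalues of $D^\lambda$, and I would next show that $A$ and $B$ are simultaneously diagonalizable with constant eigenvalues along each factor; here the Codazzi property of $B$ is what upgrades ``$D^\lambda$ has constant eigenvalues'' to ``$x$ has constant conformal principal curvatures''. At this point the problem reduces to a finite list indexed by the number of distinct eigenvalues of $D^\lambda$ and by how the eigenspaces of $B$ are distributed among them.

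Finally I would translate the resulting conformal data back into the geometry of the three Lorentzian space forms. A configuration in which $A$ and $B$ share a single eigendistribution yields the constant-scalar-curvature, constant-mean-curvature hypersurfaces of items (1)--(3); a genuine two-eigenvalue splitting produces the product hypersurfaces of items (4)--(6); and the remaining degenerate or warped-product configurations are matched against $WP(p,q,a)$ and the hypersurfaces of Examples $3.2$ and $3.3$, giving items (7)--(9). The main obstacle I anticipate is precisely this closing case analysis: one must check that every admissible eigenvalue configuration is realized by exactly one model on the list and, conversely, that the constraints $\tr B=0$, $|B|^2=\frac{m-1}{m}$ and the conformal Gauss equation rule out all spurious configurations. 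This bookkeeping, rather than the vanishing of $\Phi$ or the de Rham splitting, is where I expect the real work to lie.
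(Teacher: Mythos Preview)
Your three-stage strategy matches the paper's template (the paper proves the de Sitter analogue in detail and asserts the present Riemannian statement follows by ``the same idea and similar argument''), but Stage~1 has a genuine gap. Combining the two Codazzi identities does yield $(B_{ij}+\lambda\delta_{ij})\Phi_k=(B_{ik}+\lambda\delta_{ik})\Phi_j$, and diagonalizing $B$ at a point with $\Phi_{i_0}\neq0$ indeed forces $B_i=-\lambda$ for every $i\neq i_0$, hence $B_{i_0}=(m-1)\lambda$ by $\tr B=0$. But then $|B|^2=m(m-1)\lambda^2$, and equating this to $\frac{m-1}{m}$ gives only $\lambda^2=\frac{1}{m^2}$, not a contradiction; your mechanism fails precisely for these values of the fixed constant $\lambda$, and the ``full algebraic relation'' yields nothing further once the $B_i$ are known. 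The paper closes this differently (Lemma~\ref{lem4.3} and Proposition~\ref{prop4.1}): the two-eigenvalue structure of $B$, with one simple eigenvalue, persists on the open set where $\Phi\neq0$; since $\tr B$ and $|B|^2$ are constants the two eigenvalues are constant there, and a direct computation from \eqref{bijk} shows $B$ itself is parallel on that set, whence \eqref{Bijk} forces $\Phi\equiv0$ --- the required contradiction.

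Your case analysis is also mis-organized relative to what actually makes the argument close. The paper's dichotomy is ``$B$ parallel'' versus ``$B$ not parallel'', not a straight count of eigenvalues of $D^\lambda$. It proves (Corollary~\ref{corBP}) that $t\ge3$ forces $B$ parallel with $t=3$; all $B$-parallel cases then feed into the cited classification of hypersurfaces with parallel M\"obius second fundamental form and produce items (4)--(7) in one stroke --- in particular $CSS(p,q,r)$ has three distinct conformal principal curvatures and parallel $B$, so it is not a ``remaining degenerate or warped-product'' configuration to be grouped with (8)--(9). When $B$ is not parallel, Lemma~\ref{lemBnop} forces $t\le2$: the case $t=1$ yields (1)--(3) via the cited $A+\lambda B=\mu g$ theorem, and only the case $t=2$ with $B$ non-parallel requires the explicit product constructions, giving (8)--(9). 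Your grouping of (4)--(6) under ``two-eigenvalue splitting'' and (7)--(9) under ``remaining'' does not line up with this and would leave you without a clean entry point into the existing classifications. (A minor point: since the statement concerns $\mathbb{S}^{m+1}$, the relevant conformal maps are the $\sigma,\tau$ of \cite{clq}, not the Lorentzian $\sigma_0,\sigma_{\pm1},\Psi$ you invoke.)
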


\section{Necessary basics on regular space-like hypersurfaces}

This section provides some basics of the conformal geometry of regular space-like hypersurfaces in the Lorentzian space forms. The main idea comes originally from the work of C.P. Wang on the M\"obius geometry of umbilic-free submanifolds in the unit sphere (\cite{wcp}), and much of the details can be found in a series of papers by Nie at al (see for example \cite{ag1}--\cite{ag4}).

Let ${x}:M^m\to \mathbb{S}^{m+1}_{1}\subset\bbr^{m+2}_1$ be a regular space-like hypersurface in $\mathbb{S}^{m+1}_{1}$.
Denote by  ${h}$ the (scalar-valued) second fundamental form of ${x}$
with components ${h}_{ij}$ and ${H}=\fr1m\tr {h}$
the mean curvature. Define the conformal factor $\rho>0$ and the conformal position $Y$ of ${x}$, respectively, as follows:
\begin{equation}\label{eq2.1}
{\rho}^2=\fr m{m-1}\left(|{h}|^2-m|{H}|^2\right),\quad {Y}={\rho}(1, x)\in \bbr^1_1\times\bbr^{m+2}_1\equiv\bbr^{m+3}_2.
\end{equation}
Then ${Y}(M^m)$ is clearly included in the light cone $\mathbb{C}^{m+2}\subset\bbr^{m+3}_2$ where
$$
\mathbb{C}^{m+2}=\{\xi\in\bbr^{m+3}_2;\lagl \xi,\xi\ragl_2=0,\xi\neq0\}.
$$
The positivity of $\rho$ implies that ${Y}:M^m\to \bbr^{m+3}_2$ is an immersion of $M^m$ into the $\bbr^{m+3}_2$.
Clearly, the metric ${g}:=\lagl d{Y},d{Y}\ragl_2\equiv{\rho}^2\lagl dx,dx\ragl_1$ on $M^m$, induced by $Y$ and called the conformal metric, is invariant under the pseudo-orthogonal group $O(m+3,2)$ of linear transformations on $\bbr^{m+3}_2$ reserving the Lorentzian product $\lagl\cdot,\cdot\ragl_2$. Such kind of things are called the conformal invariants of $ x$.

For any local orthonormal frame field $\{e_i\}$ and the dual $\{\theta^i\}$ on $M^m$ with respect to the standard metric $\lagl d x,d x\ragl_1$, define
\begin{equation}
E_i={\rho}^{-1}e_i,\quad \omega^i={\rho}\theta^i.
\end{equation}
Then $\{E_i\}$ is a local orthonormal frame field with respect to the conformal metric ${g}$ with $\{\omega^i\}$ its dual coframe.
Let ${n}$ be the time-like unit normal of ${x}$. Define
$$ {\xi}=\left(-{H},-{H} x+{n}\right),$$
then $\lagl {\xi},{\xi} \ragl_2=-1$.
Let $\Delta$ denote the Laplacian with respect to  the conformal metric ${g}$. Define
${N}:M^{m}\to\bbr^{m+3}_2$ by
\begin{equation}
{N}=-\fr1m\Delta {Y}-\fr1{2m^2}\lagl\Delta {Y},\Delta {Y}\ragl_2{Y}.
\end{equation}
Then it holds that
\begin{equation}
\lagl \Delta {Y},{Y}\ragl_2=-m,\quad
\lagl {Y},{Y}\ragl_2=\lagl {N},{N}\ragl_2=0,\quad \lagl {Y},{N}\ragl_2=1.
\end{equation}
Furthermore, $\{{Y},{N},{Y}_{i},{\xi},\  1\leq i\leq m\}$ forms a moving frame in $\bbr^{m+3}_2$ along ${Y}$, with respect to which the equations of motion are as follows:
\begin{equation}\label{eq2.5}
\left\{\begin{aligned}
d{Y}=&\,\sum{Y}_i\omega^i,\\
d{N}=&\,\sum\psi_i{Y}_i+\Phi  \xi,\\
d{Y}_i=&-\psi_i{Y}-\omega_i {N}+\sum\omega_{ij}{Y}_j+\tau_i  \xi,\\
d \xi=&\,\Phi {Y}+\sum\tau_i{Y}_i.
\end{aligned}\right.
\end{equation}
By the exterior differentiation of \eqref{eq2.5} and using Cartan's lemma, we can write
\begin{equation}
\begin{aligned}
&\Phi=\sum\Phi_i\omega^i,\quad \psi_i=\sum A_{ij}\omega^j,\quad A_{ij}=A_{ji};\\
&\tau_i=\sum B_{ij}\omega^j,\quad B_{ij}=B_{ji}.
\end{aligned}
\end{equation}
Then the conformal form $\Phi$, the Blaschke tensor $A$ and the conformal second fundamental form $B$ defined by
$$\Phi=\sum\Phi_i\omega^i,\quad A=\sum A_{ij}\omega^i\omega^j,\quad B=\sum B_{ij}\omega^i\omega^j$$
are clearly conformal invariants. By a long but direct computation, we find that
\begin{align}
A_{ij}=&-\lagl{Y}_{ij},{N}\ragl_2=-{\rho}^{-2}((\log {\rho})_{,ij}-e_i(\log {\rho})e_j(\log {\rho})+{h}_{ij}{H})\nnm\\
&-\fr12{\rho}^{-2}\left(|\bar{\nabla}\log {\rho}|^2 -|H|^2-1\right)\delta_{ij},\\
B_{ij}=&-\lagl{Y}_{ij},{\xi}\ragl_2={\rho}^{-1}({h}_{ij}-{H}\delta_{ij}),\label{eq2.8}\\
\Phi_i=&-\lagl {\xi},d{N}\ragl_2=-{\rho}^{-2}[({h}_{ij}-{H}\delta_{ij})e_j(\log {\rho})+e_i({H})],\label{eq2.9}
\end{align}
where $Y_{ij}=E_j(Y_i)$, $\bar\nabla$ is the Levi-Civita connection of the induced metric from $\lagl\cdot,\cdot\ragl_1$, and the subscript ${}_{,ij}$ denotes the covariant derivatives with respect to $\bar\nabla$. The differentiation of \eqref{eq2.5} also gives the following integrability conditions:
\begin{align}
&\Phi_{ij}-\Phi_{ji}
=\sum(B_{ik}A_{kj}-B_{kj}A_{ki}),\label{Phiij}\\
&A_{ijk}-A_{ikj}=B_{ij}\Phi_k-B_{ik}\Phi_j,\label{Aijk}\\
&B_{ijk}-B_{ikj}=\delta_{ij}\Phi_k-\delta_{ik}\Phi_j,\label{Bijk}\\
&R_{ijkl}=B_{ik}B_{jl}
-B_{il}B_{jk}+A_{il}\delta_{jk}-A_{ik}\delta_{jl}
+A_{jk}\delta_{il}-A_{jl}\delta_{ik},\label{Rijkl}
\end{align}
where $A_{ijk}$, $B_{ijk}$, $\Phi_{ij}$ are respectively the components of the covariant derivatives of $A$, $B$, $\Phi$, and $R_{ijkl}$ is the components of the Riemannian curvature tensor of the conformal metric $g$. Furthermore, by \eqref{eq2.1} and \eqref{eq2.8} we have
\be\label{trB}
\tr B=\sum B_{ii}=0,\quad |B|^2=\sum(B_{ij})^2=\fr{m-1}{m},
\ee
and by \eqref{Rijkl} we find the Ricci curvature tensor
\be\label{Rij}
R_{ij}=\sum B_{ik}B_{kj}+\delta_{ij}\tr
A+(m-2)A_{ij},
\ee
which implies that
\be\label{trA}
\tr A=\fr{1}{2m}(m^2\kappa-1)
\ee
with $\kappa$ being the normalized scalar curvature of $g$.

It is easily seen (\cite{ag1}) that the conformal position vector ${Y}$ defined above is exactly the canonical lift of the composition map of
$\bar x=\sigma_1\circ{x}:M^m\to\mathbb{Q}^{m+1}_1$, implying that the conformal invariants ${g},\Phi,A,B$ defined above are the same as those of $\bar x$ introduced by Nie at al in \cite{ag1}.

{\dfn[cf. \cite{ag1}] Let $x,\td x:M^m\to \mathbb{S}^{m+1}_1$ be two regular space-like hypersurfaces with $Y,\td Y$ their conformal position vectors, respectively. If there exists some element $\mathbb{T}\in O(m+3,2)$ such that $\td Y=\mathbb{T}(Y)$, then $x,\td x$ are called conformal equivalent to each other.}

As it has appeared in \cite{LS}, the conformal space $\mathbb{Q}^{m+1}_1$ is apparently covered by the following two non-homogeneous coordinate systems $(U_{\alpha},\strl{(\alpha)}{\Psi})$ modeled on the de Sitter space $\mathbb{S}^{m+1}_1$, $\alpha=1,2$, where
\begin{equation}
\begin{aligned}
U_1=\left\{[y]\in\mathbb{Q}^{m+1}_1;y=(y_1,y_2,y_3)\in \bbr^1_1\times\bbr^1_1\times\bbr^{m+1}\equiv\bbr^{m+3}_2,y_1\neq0\right\},\\
U_2=\left\{[y]\in\mathbb{Q}^{m+1}_1;y=(y_1,y_2,y_3)\in \bbr^1_1\times\bbr^1_1\times\bbr^{m+1}\equiv\bbr^{m+3}_2,y_2\neq0\right\},
\end{aligned}
\end{equation}
and the diffeomorphisms
$\strl{(\alpha)}{\Psi}$: $U_{\alpha}\to \mathbb{S}^{m+1}_1$, $\alpha=1,2$,
are defined by
\begin{align}
\strl{{ (1)}}{\Psi}([y])=y^{-1}_1(y_2,y_3), \text{ for } [y]\in U_1,\ y=(y_1,y_2,y_3);\\
\strl{{ (2)}}{\Psi}([y])=y^{-1}_2(y_1,y_3), \text{ for } [y]\in U_2,\ y=(y_1,y_2,y_3). \end{align}
Then, with respect to the conformal structure on $\mathbb{Q}^{m+1}_1$ introduced in \cite{ag1} and the standard metric on $\mathbb{S}^{m+1}_1$, both $\strl{(1)}{\Psi}$ and $\strl{(2)}{\Psi}$ are conformal.

Now for a regular space-like hypersurface $\bar x:M^m \to\mathbb{Q}^{m+1}_1$ with the canonical lift $$Y:M^m\to\mathbb{C}^{m+2}\subset\bbr^{m+3}_2,$$
write $Y=(Y_1,Y_2,Y_3)\in \bbr^1_1\times\bbr^1_1\times\bbr^{m+1}$. Then we have the following two composed hypersurfaces:
\begin{equation}
\strl{{ (\alpha)}}{x}:=\strl{{ (\alpha)}}{\Psi}\! \circ \,\,\bar x|_{\strl{{ (\alpha)}}{M}}:\strl{{ (\alpha)}}{M}\to \mathbb{S}^{m+1}_1, \quad
\strl{{ (\alpha)}}{M}=\{p\in M^{m}; \bar x(p)\in U_{\alpha}\},\quad\alpha=1,2.
\end{equation}
Then $M^m=\strl{(1)}{M}\bigcup \strl{(2)}{M}$, and the following lemma is clearly true by a direct computation:

\begin{lem}[\cite{LS}] The conformal position vector $\strl{{ (1)}}{Y}$ of $\strl{{ (1)}}{x}$ is nothing but $Y|_{\strl{{ (1)}}{M}}$, while the conformal position vector $\strl{{ (2)}}{Y}$ of $\strl{{ (2)}}{x}$ is given by
\begin{equation}
\strl{{ (2)}}{Y}=\mathbb{T}(Y|_{\strl{{(2)}}{M}}),\mbox{ where } \mathbb{T}=\left( \begin{tabular}{c:c}$\begin{matrix} 0&1\\1&0\end{matrix}$&$0$\\
\hdashline
$0$&$I_{m+1}$\end{tabular}\right).
\end{equation}
\end{lem}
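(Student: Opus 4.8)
The plan is to reduce the statement, after a short coordinate computation, to the fact (recalled just before the Definition) that the conformal position vector of a regular space-like hypersurface in $\mathbb{S}^{m+1}_1$ is exactly the canonical lift of its composition with $\sigma_1$, together with the uniqueness and $O(m+3,2)$-naturality of that canonical lift. Write the canonical lift of $\bar x$ as $Y=(Y_1,Y_2,Y_3)\in\bbr^1_1\times\bbr^1_1\times\bbr^{m+1}$. On $\strl{(1)}{M}$ one has $Y_1\neq0$, so by the very definition of $\strl{(1)}{\Psi}$,
\[
\strl{(1)}{x}=\strl{(1)}{\Psi}\circ\bar x=Y_1^{-1}(Y_2,Y_3),\qquad\text{hence}\qquad (1,\strl{(1)}{x})=Y_1^{-1}\,Y .
\]
Since by \eqref{eq2.1} the conformal position vector of $\strl{(1)}{x}$ has the form $\strl{(1)}{Y}=\strl{(1)}{\rho}\,(1,\strl{(1)}{x})$ with $\strl{(1)}{\rho}>0$, this already gives $\strl{(1)}{Y}=\mu\,Y$ for the positive function $\mu=\strl{(1)}{\rho}\,Y_1^{-1}$; in other words $\strl{(1)}{Y}$ and $Y|_{\strl{(1)}{M}}$ are two lifts of the one projective immersion $\bar x|_{\strl{(1)}{M}}$.

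The second step would be to upgrade this proportionality to an equality. Differentiating $\lagl Y,Y\ragl_2=0$ gives $\lagl Y,dY\ragl_2=0$, so for any positive function $\mu$ one has $\lagl d(\mu Y),d(\mu Y)\ragl_2=\mu^2\lagl dY,dY\ragl_2$; thus the two candidate conformal metrics differ exactly by the factor $\mu^2$. But the scale of the lift is pinned down by the normalization $|B|^2=\fr{m-1}{m}$ recorded in \eqref{trB}, which is nothing but the defining condition on $\rho$ in \eqref{eq2.1}. Since both $\strl{(1)}{Y}$ and $Y|_{\strl{(1)}{M}}$ are canonical lifts of the same immersion obeying this scale-fixing condition, we must have $\mu\equiv1$, i.e. $\strl{(1)}{Y}=Y|_{\strl{(1)}{M}}$; equivalently, one may simply invoke the uniqueness of the canonical lift of a given regular space-like hypersurface in $\mathbb{Q}^{m+1}_1$.

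For $\alpha=2$ the same mechanism applies after bringing in $\mathbb{T}$. On $\strl{(2)}{M}$ we have $Y_2\neq0$ and $\strl{(2)}{x}=Y_2^{-1}(Y_1,Y_3)$, so $(1,\strl{(2)}{x})=Y_2^{-1}(Y_2,Y_1,Y_3)=Y_2^{-1}\,\mathbb{T}(Y)$, where $\mathbb{T}$ is the displayed matrix interchanging the two time-like coordinates $Y_1,Y_2$ and fixing $Y_3$. A one-line check gives $\lagl\mathbb{T}\xi,\mathbb{T}\xi\ragl_2=\lagl\xi,\xi\ragl_2$, so $\mathbb{T}\in O(m+3,2)$; consequently $\mathbb{T}$ sends null vectors to null vectors and preserves the pairing used to build all the conformal invariants, which makes the canonical lift equivariant, i.e. the canonical lift of $\mathbb{T}\circ\bar x$ is $\mathbb{T}(Y)$. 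Repeating the scale argument of the previous paragraph with $Y$ replaced by $\mathbb{T}(Y)$, and fixing the remaining sign by the positivity of the conformal factor $\strl{(2)}{\rho}$, yields $\strl{(2)}{Y}=\mathbb{T}(Y|_{\strl{(2)}{M}})$, as claimed. The only genuine obstacle is the passage from proportionality to equality in the second step: the coordinate identities $(1,\strl{(\alpha)}{x})=Y_1^{-1}Y$ and $Y_2^{-1}\mathbb{T}(Y)$ are immediate, but one must be careful to justify that both lifts satisfy the \emph{same} normalization, so that the conformal-metric argument (or the uniqueness of the canonical lift) forces the proportionality factor to be exactly $1$ rather than merely a constant; the verification that $\mathbb{T}\in O(m+3,2)$ and the attendant equivariance is then routine.
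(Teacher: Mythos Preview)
The paper does not prove this lemma here; it is quoted from the authors' earlier preprint \cite{LS} and stated without argument. Your approach is the natural one and is essentially correct: the coordinate identities
\[
(1,\strl{(1)}{x})=Y_1^{-1}Y,\qquad (1,\strl{(2)}{x})=Y_2^{-1}\,\mathbb{T}(Y)
\]
follow immediately from the definitions of $\strl{(\alpha)}{\Psi}$, and the passage from proportionality to equality is precisely the uniqueness of the canonical lift, which you correctly identify with the scale-fixing normalization \eqref{trB}. The check that $\mathbb{T}\in O(m+3,2)$ (so that $\mathbb{T}(Y)$ again satisfies the canonical-lift normalization) is, as you say, routine.

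One small presentational point: you assert that $\mu=\strl{(1)}{\rho}\,Y_1^{-1}$ is \emph{positive}, but a priori the definition of $\strl{(1)}{M}$ only gives $Y_1\neq 0$. What your conformal-metric argument actually yields is $\mu^2\equiv 1$, hence $\mu\equiv\pm 1$ on each connected component; the sign is then fixed by the positivity convention on the canonical lift, exactly as you do explicitly in the $\alpha=2$ case. It would be cleaner to phrase the $\alpha=1$ case the same way rather than claiming positivity of $\mu$ up front.
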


\begin{cor}[\cite{LS}]\label{cor2.2}
The basic conformal invariants $g,\Phi,A,B$ of $\bar x$ coincide accordingly with those of each of $\strl{{ (1)}}{x}$ and $\strl{{ (2)}}{x}$ on where $\strl{{ (1)}}{x}$ or $\strl{{ (2)}}{x}$ is defined, respectively.
\end{cor}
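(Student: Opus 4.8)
The plan is to deduce everything from the $O(m+3,2)$-invariance of the basic conformal invariants, which was already recorded when the conformal metric $g=\lagl dY,dY\ragl_2$ was introduced, together with the explicit transition law furnished by the preceding Lemma. The crux is the single observation that the transition matrix $\mathbb{T}$ of that Lemma lies in $O(m+3,2)$. First I would verify this directly: writing $\eta=(\eta_1,\eta_2,\eta_3)\in\bbr^1_1\times\bbr^1_1\times\bbr^{m+1}$, the product reads $\lagl\eta,\eta\ragl_2=-\eta_1^2-\eta_2^2+|\eta_3|^2$, and since $\mathbb{T}$ merely interchanges the two time-like coordinates $\eta_1,\eta_2$ while fixing $\eta_3$, it preserves $\lagl\cdot,\cdot\ragl_2$. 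Thus $\mathbb{T}$ is a constant linear isometry of $\bbr^{m+3}_2$.

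Next I would treat the two charts separately. For $\strl{(1)}{x}$ the Lemma gives $\strl{(1)}{Y}=Y|_{\strl{(1)}{M}}$ outright, so on $\strl{(1)}{M}$ the moving frame $\{Y,N,Y_i,\xi\}$ and every quantity built from it through \eqref{eq2.1}, \eqref{eq2.8} and \eqref{eq2.9} is literally identical; hence $g,\Phi,A,B$ coincide there with no computation at all. For $\strl{(2)}{x}$ the Lemma gives $\strl{(2)}{Y}=\mathbb{T}(Y|_{\strl{(2)}{M}})$. Because $\mathbb{T}$ is constant it commutes with exterior differentiation and with the Laplacian $\Delta$, the latter depending only on $g$, which $\mathbb{T}$ leaves invariant. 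Consequently the whole construction is $\mathbb{T}$-equivariant: $\strl{(2)}{N}=\mathbb{T}N$, the frame vectors satisfy $\strl{(2)}{Y_i}=\mathbb{T}Y_i$, and $\strl{(2)}{\xi}=\mathbb{T}\xi$. Since each of $g$, $A_{ij}=-\lagl Y_{ij},N\ragl_2$, $B_{ij}=-\lagl Y_{ij},\xi\ragl_2$ and $\Phi_i=-\lagl\xi,dN\ragl_2$ is expressed purely through $\lagl\cdot,\cdot\ragl_2$, and $\mathbb{T}$ preserves this product, the invariants of $\strl{(2)}{x}$ agree with those of $\bar x$ on $\strl{(2)}{M}$.

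I do not expect a serious obstacle here; the corollary is essentially bookkeeping once the Lemma and the $O(m+3,2)$-invariance are in hand. The one point meriting genuine care is the equivariance of the auxiliary vector $N$, since $N$ is defined self-referentially through $\Delta Y$ and the normalization $\lagl\Delta Y,Y\ragl_2=-m$, $\lagl Y,N\ragl_2=1$. The hard part is therefore to confirm that forming $\Delta$ with respect to the conformal metric $g$ (rather than an ambient metric) guarantees $\Delta(\mathbb{T}Y)=\mathbb{T}(\Delta Y)$, so that these normalization conditions are transported correctly and $N$ is indeed carried to $\mathbb{T}N$; granting this, the agreement of $g,\Phi,A,B$ follows at once.
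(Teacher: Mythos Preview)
Your proposal is correct and matches the paper's approach. The paper does not give an explicit proof of this corollary (it is cited from \cite{LS}); the reasoning is implicit in the remark, made when the conformal metric was introduced, that $g$ and ``such kind of things'' are invariant under $O(m+3,2)$, combined with the preceding Lemma identifying the two conformal position vectors as $Y$ and $\mathbb{T}(Y)$ with $\mathbb{T}\in O(m+3,2)$---exactly the line you take, only spelled out in more detail.
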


Therefore, $\strl{{ (1)}}{\Psi}$ and $\strl{{ (2)}}{\Psi}$ can be viewed as two non-homogenous coordinate maps preserving the conformal invariants of the regular space-like hypersurfaces. Thus we have

\begin{cor}
$\strl{{ (1)}}{x}$ and $\strl{{ (2)}}{x}$ are conformal equivalent to each other on $\strl{{ (1)}}{M}\cap \strl{{ (2)}}{M}$.
\end{cor}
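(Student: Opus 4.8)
The plan is to read the conclusion directly off the preceding Lemma together with the Definition of conformal equivalence. By the Lemma, on the overlap $\strl{(1)}{M}\cap\strl{(2)}{M}$ the two conformal position vectors are related by $\strl{(2)}{Y}=\mathbb{T}(Y|_{\strl{(2)}{M}})=\mathbb{T}(\strl{(1)}{Y})$, where $\mathbb{T}$ is the explicit block matrix that interchanges the first two (time-like) coordinates and fixes the remaining $m+1$ coordinates. Thus the entire content of the Corollary reduces to verifying that this particular $\mathbb{T}$ is an element of $O(m+3,2)$; once that is checked, the Definition immediately yields the asserted conformal equivalence.

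The single substantive step, then, is to confirm $\mathbb{T}\in O(m+3,2)$. Under the decomposition $\bbr^{m+3}_2=\bbr^1_1\times\bbr^1_1\times\bbr^{m+1}$, the product $\lagl\cdot,\cdot\ragl_2$ acts as $-1$ on each of the first two factors and as $+1$ on the last. For $\xi=(\xi_1,\xi_2,\xi_3)$ one has $\lagl\xi,\xi\ragl_2=-\xi_1^2-\xi_2^2+|\xi_3|^2$, and since $\mathbb{T}$ merely swaps the two equal-signed slots $\xi_1,\xi_2$ while fixing $\xi_3$, this quadratic form is left unchanged. Hence $\mathbb{T}$ preserves $\lagl\cdot,\cdot\ragl_2$ and lies in $O(m+3,2)$.

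There is essentially no obstacle here: the real work has already been carried out in establishing the Lemma, and what remains is the elementary observation that interchanging two coordinate directions carrying the same metric sign is a pseudo-orthogonal transformation. I would finish by noting that $\strl{(1)}{Y}$ and $\strl{(2)}{Y}$ are both genuine conformal position vectors, being the canonical lifts of the composed hypersurfaces $\strl{(1)}{x}$ and $\strl{(2)}{x}$ set up just before the Lemma, so the hypotheses of the Definition are met, and conclude that $\strl{(1)}{x}$ and $\strl{(2)}{x}$ are conformal equivalent on $\strl{(1)}{M}\cap\strl{(2)}{M}$.
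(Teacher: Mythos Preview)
Your proposal is correct and follows essentially the approach the paper leaves implicit: the paper states the Corollary without proof, immediately after Lemma~2.1 (giving $\strl{(2)}{Y}=\mathbb{T}(Y)$) and Corollary~2.2, with only the phrase ``Thus we have''. Your argument---restrict to the overlap so that $\strl{(2)}{Y}=\mathbb{T}(\strl{(1)}{Y})$, check $\mathbb{T}\in O(m+3,2)$ because it swaps two coordinates of the same (negative) signature, and invoke Definition~2.1---is precisely the direct route the paper's presentation invites, and nothing is missing.
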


On the other hand, up to conformal equivalences, all the regular space-like hypersurfaces in the three Lorentzian space forms can be viewed as ones in $\mathbb{Q}^{m+1}_1$ via the conformal embeddings $\sigma_1$, $\sigma_0$ and $\sigma_{-1}$ defined in \eqref{eq1.1}. Now, using $\strl{{ (1)}}{\Psi}$ and $\strl{{ (2)}}{\Psi}$, one can shift the conformal geometry of regular space-like hypersurfaces in $\mathbb{Q}^{m+1}_1$ to that of regular space-like hypersurfaces in the de Sitter space $\mathbb{S}^{m+1}_1$. It follows that, in a sense, the conformal geometry of regular space-like hypersurfaces can also be unified as that of the corresponding hypersurfaces in the de Sitter space.
Concisely, we can achieve this simply by introducing the following four conformal maps:
\begin{align}
&\strl{{ (1)}}{\sigma}=\strl{{ (1)}}{\Psi}\!\circ \,\,\sigma_0:\ \strl{{ (1)}}{\bbr}\!\!{}^{m+1}_1\rightarrow \mathbb{S}^{m+1}_1 ,\quad
u\mapsto\left(\fr{2u}{1+\lagl u,u\ragl},\fr{1-\lagl u,u\ragl}{1+\lagl u,u\ragl}\right),
\\
&\strl{{ (2)}}{\sigma}=\strl{{ (2)}}{\Psi}\!\circ \,\,\sigma_0:\ \strl{{ (2)}}{\bbr}\!\!{}^{m+1}_1\rightarrow \mathbb{S}^{m+1}_1 ,\quad
u\mapsto\left(\fr{1+\lagl u,u\ragl}{2u_1},\fr{u_2}{u_1},\fr{1-\lagl u,u\ragl}{2u_1}\right),
\\
&\strl{{ (1)}}{\tau}=\strl{{ (1)}}{\Psi}\!\circ \,\,\sigma_{-1}:\ \strl{{ (1)}}{\mathbb{H}}\!\!{}^{m+1}_1\rightarrow \mathbb{S}^{m+1}_1 ,\quad
y\mapsto\left(\fr{y_2}{y_1},\fr{y_3}{y_1},\fr{1}{y_1}\right),
\\
&\strl{{ (2)}}{\tau}=\strl{{ (2)}}{\Psi}\!\circ \,\,\sigma_{-1}:\ \strl{{ (2)}}{\mathbb{H}}\!\!{}^{m+1}_1\rightarrow \mathbb{S}^{m+1}_1 ,\quad
y\mapsto\left(\fr{y_1}{y_2},\fr{y_3}{y_2},\fr{1}{y_2}\right),
\end{align}
where
\begin{align}
&\strl{{ (1)}}{\bbr}\!\!{}^{m+1}_1=\{u\in \bbr^{m+1}_1;\ 1+\lagl u,u \ragl\neq0\},\quad
\strl{{ (2)}}{\bbr}\!\!{}^{m+1}_1=\{u=(u_1,u_2)\in \bbr^{m+1}_1;\ u_1\neq0\},\\
&\strl{{ (1)}}{\mathbb{H}}\!\!{}^{m+1}_1=\{y=(y_1,y_2,y_3)\in \mathbb{H}^{m+1}_1;\ y_1\neq0\},\quad
\strl{{ (2)}}{\mathbb{H}}\!\!{}^{m+1}_1=\{y=(y_1,y_2,y_3)\in \mathbb{H}^{m+1}_1;\ y_2\neq0\}.
\end{align}

The following theorem will be used later in this paper:

\begin{thm}[\cite{ag2}]\label{thm2.4} Two hypersurfaces $x:M^m \rightarrow \mathbb{S}^{m+1}_1$ and $\td x:\td{M}^m \rightarrow \mathbb{S}^{m+1}_1$ $(m\geq3)$ are conformal equivalent if and only if there exists a diffeomorphism $f:M\rightarrow \td{M}$ which preserves the conformal metric and the conformal second fundamental form.
\end{thm}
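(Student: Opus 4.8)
The plan is to prove the nontrivial direction; necessity is immediate, since any $\mathbb{T}\in O(m+3,2)$ is linear and preserves $\lagl\cdot,\cdot\ragl_2$, hence carries the conformal position $Y$ of $x$ to that of $\td x$ and thereby preserves every invariant built from inner products of $Y$ and its derivatives, in particular $g$ and $B$. For the converse, suppose $f:M\to\td M$ preserves both the conformal metric $g$ and the conformal second fundamental form $B$. The key point is the algebraic fact that, when $m\geq3$, the pair $(g,B)$ already determines the remaining two invariants $A$ and $\Phi$; granting this, $f$ automatically preserves all four basic invariants, and a moving-frame rigidity argument then produces the desired $\mathbb{T}$.

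To recover $A$, I would read it off the Gauss equation \eqref{Rijkl}. Setting $W_{ijkl}:=R_{ijkl}-(B_{ik}B_{jl}-B_{il}B_{jk})$, the tensor $W$ equals the Kulkarni--Nomizu type expression $A_{il}\delta_{jk}-A_{ik}\delta_{jl}+A_{jk}\delta_{il}-A_{jl}\delta_{ik}$, which involves only $A$. Contracting the indices $j=k$ gives $\sum_j W_{ijjl}=(m-2)A_{il}+(\tr A)\delta_{il}$, and since $\tr A$ is expressed through the normalized scalar curvature of $g$ by \eqref{trA}, for $m\geq3$ one can solve
\[
A_{il}=\fr1{m-2}\Big(\sum_j W_{ijjl}-(\tr A)\delta_{il}\Big).
\]
As $R_{ijkl}$ is determined by $g$ alone and $B$ is given, this exhibits $A$ as a universal expression in $g$ and $B$; hence $f$ preserves $A$.

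To recover $\Phi$, I would use the Codazzi equation \eqref{Bijk}. Contracting $i=j$ and invoking $\tr B=0$ from \eqref{trB}, so that $\sum_i B_{iik}=0$, yields $(m-1)\Phi_k=-\sum_i B_{iki}$, which expresses $\Phi$ through the $g$-covariant derivative of $B$; thus $f$ preserves $\Phi$ as well. Consequently $f$ identifies the complete set of structure coefficients $\{\omega^i,\omega_{ij},\psi_i,\tau_i,\Phi\}$ entering the equations of motion \eqref{eq2.5} for $x$ with those for $\td x$.

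Finally I would invoke the rigidity of the moving frame. Assembling $F=(Y,N,Y_1,\dots,Y_m,\xi)$ and its counterpart $\td F$ into $O(m+3,2)$-valued frames --- both pseudo-orthonormal with respect to the fixed constant Gram matrix of $\{Y,N,Y_i,\xi\}$ --- the frames $F$ and $\td F\circ f$ satisfy one and the same integrable linear Pfaffian system $dF=F\,\Omega$. Choosing $\mathbb{T}\in O(m+3,2)$ with $\mathbb{T}\big(\td F(f(p_0))\big)=F(p_0)$ at a base point $p_0$, the solutions $\mathbb{T}(\td F\circ f)$ and $F$ share the same initial value, so by the Cartan--Darboux uniqueness theorem they coincide on the connected component containing $p_0$; in particular $Y=\mathbb{T}(\td Y\circ f)$, which is exactly conformal equivalence. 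The main obstacle is the algebraic inversion recovering $A$: the factor $\tfrac1{m-2}$ is precisely what forces the hypothesis $m\geq3$, and one must take care that $\tr A$ is a genuine function of $g$ via \eqref{trA} and not an independent datum.
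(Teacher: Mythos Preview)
The paper does not supply its own proof of this statement: Theorem~\ref{thm2.4} is quoted from \cite{ag2} and used as a black box, so there is nothing in the paper to compare your argument against line by line.

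Your proposal is nonetheless a correct and standard proof of the result. The two algebraic recoveries are exactly right: contracting \eqref{Rijkl} gives $(m-2)A_{il}+(\tr A)\delta_{il}$ in terms of $g$ and $B$, and \eqref{trA} then fixes $\tr A$ from $g$ alone, so $A$ is determined when $m\geq3$; contracting \eqref{Bijk} over $i=j$ and using $\tr B=0$ gives $(m-1)\Phi_k=-\sum_i B_{iki}$, recovering $\Phi$ from the $g$-covariant derivative of $B$. With all four invariants matched, the Maurer--Cartan forms $\Omega$ of the frames $F$ and $\td F\circ f$ agree, and the Cartan--Darboux uniqueness argument produces the required $\mathbb{T}\in O(m+3,2)$. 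This is precisely the strategy behind the congruence theorem in \cite{ag2} (and its Riemannian prototype in \cite{wcp}), so your approach is the expected one; the only thing worth adding for completeness is the remark that connectedness of $M^m$ is used when passing from the local uniqueness of solutions of $dF=F\Omega$ to a global identity.
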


For a regular space-like hypersurface $x:M^m \rightarrow \mathbb{S}^{m+1}_1$, one calls $D^\lambda:=A+\lambda B$ a {\it para-Blaschke tensor} of $x$ with a real parameter $\lambda$ (cf. \cite{ZS}). From \eqref{Aijk} and \eqref{Bijk}, we have then
\begin{equation}
\label{Dijk}
D^\lambda_{ijk}-D^\lambda_{ikj}=
(B_{ij}+\lambda\delta_{ij})\Phi_k-(B_{ik}+\lambda\delta_{ik})\Phi_j,
\end{equation}
where $D^\lambda_{ijk}$ are components of the covariant derivatives of $D^\lambda$.

\section{Examples}

In this section, we shall list three kinds of regular space-like hypersurfaces in $\mathbb{S}^{m+1}_1$ two of which are new up to now. Necessary computations are presented in detail to find their conformal invariants. In particular, it is shown that these hypersurfaces are all of parallel para-Blaschke tensors and, in particular, of vanishing conformal forms.

{\expl[\cite{ag1}, cf.\cite{hl04}]\label{expl3.1}\rm Let $\bbr^+$ be the half line of
positive real numbers. For any two given natural numbers $p,q$
with $p+q<m$ and a real number $a>1$, consider the hypersurface of warped product embedding
$$\dps u:\mathbb{H}^q\left(-\fr1{a^2-1}\right)\times\mathbb{S}^p(a)\times \bbr^+\times\bbr^{m-p-q-1}\to
\bbr^{m+1}_1$$
defined by
$$u=(tu',tu'',u'''),\mb{ where }
u'\in \mathbb{H}^q\left(-\fr1{a^2-1}\right),\ u''\in \mathbb{S}^p(a),\ t\in\bbr^+,\ u'''\in\bbr^{m-p-q-1}.$$
Then $\bar x:=\sigma_0\circ u$ is a regular space-like hypersurface in the conformal space $\mathbb{Q}^{m+1}_1$ with parallel conformal second fundamental form. This hypersurface is denoted as $WP(p,q,a)$ in \cite{ag1}. By Proposition 3.1 in \cite{ag1} together with the argument in its proof, $\bar x$ is also of parallel Blaschke tensor. It follows from Corollary \ref{cor2.2} that the composition map $$x=\Psi\circ\bar x:\mathbb{H}^q\left(-\fr1{a^2-1}\right)\times\mathbb{S}^p(a)\times \bbr^+\times\bbr^{m-p-q-1}\to \mathbb{S}^{m+1}_1,$$
where $\Psi$ denotes $\strl{(1)}{\Psi}$ or $\strl{(2)}{\Psi}$, defines a regular space-like hypersurface in $\mathbb{S}^{m+1}_1$ with both parallel conformal second fundamental form and parallel Blaschke tensor, implying the identically vanishing of the conformal form. Then it follows that the para-Blaschke tensor $D^\lambda$ of $x$ for any $\lambda$ is parallel. Note that, by a direct calculation, one easily finds that both $WP(p,q,a)$ and $x$ has exactly three distinct conformal principal curvatures.

{\expl\label{expl3.2}\rm Given $r>0$. For any integers $m$ and $K$ satisfying $m\geq
3$ and $2\leq K\leq m-1$, let $\td y_1:M^{K}_1\to
\mathbb{S}^{K+1}_1(r)\subset\bbr^{K+2}_1$ be a regular space-like hypersurface with constant scalar curvature $\td S_1$ and the  mean curvature $\td H_1$ satisfy}
\begin{equation}\label{ex1S}
\td S_1=\fr{mK(K-1)+(m-1)r^2}{mr^2}-m(m-1)\lambda^2,\quad \td H_1=\fr{m}{K}\lambda
\end{equation}
and
$$
\td y=(\td y_0,\td y_2):
\mathbb{H}^{m-K}\left(-\fr1{r^2}\right)\to\bbr^1_1\times\bbr^{m-K}\equiv\bbr^{m-K+1}_1
$$
be the canonical embedding, where $\td y_0>0$. Set
\begin{equation}\label{eq3.2}
\td M^m=M^{K}_1\times \mathbb{H}^{m-K}\left(-\fr1{r^2}\right),\quad\td Y=(\td
y_0,\td y_1,\td y_2).
\end{equation}
Then $\td Y:\td M^m\to\bbr^{m+3}_2$ is an immersion satisfying
$\lagl \td Y,\td Y\ragl_2=0$. The induced metric
$$g=\lagl d\td Y,d\td Y\ragl_2=-d\td y_0^2+\lagl d\td y_1,d\td y_1\ragl_1+d\td y_2\cdot d\td y_2$$
by $\td Y$ is clearly a Riemannian one, and thus as Riemannian manifolds we have
\begin{equation}
\label{exm1}
(\td M^m,g)=(M_1,\lagl d\td y_1,d\td y_1\ragl_1)\times \left(\mathbb{H}^{m-K}\left(-\fr1{r^2}\right),
\lagl d\td y,d\td y\ragl_1\right).
\end{equation}
Define
\begin{equation}
\td x_1=\fr{\td y_1}{\td y_0},\quad \td
x_2=\fr{\td y_2}{\td y_0}, \quad \td x=(\td x_1,\td x_2).
\end{equation}
Then we have a smooth map $\td x:M^m\to \mathbb{S}^{m+1}_1$. Furthermore,
\begin{equation}
\label{dx}
d\td x=-\fr{d\td y_0}{\td y^2_0}(\td y_1,\td y_2) +\fr1{\td
y_0}(d\td y_1,d\td y_2).
\end{equation}
Therefore the induced ``metric'' $\td g=\lagl d\td x, d\td x\ragl_1$ is derived as
\begin{equation}
\begin{aligned}
\td g=\ &\td y^{-4}_0d\td y^2_0(\lagl \td y_1,\td y_1\ragl_1+\td y_2\cdot \td y_2)+\td y^{-2}_0(\lagl d\td y_1,d\td y_1\ragl_1+d\td y_2\cdot d\td y_2)\\
&-2\td y^{-3}_0d\td y_0(\lagl \td y_1,d\td y_1\ragl_1+\td y_2\cdot d\td y_2)\\
=\ &\td y^{-2}_0(d\td y^2_0+g+d\td y^2_0-2d\td y^2_0)\\
=\ &\td y^{-2}_0g,
\end{aligned}
\end{equation}
implying that $\td x$ is a regular space-like hypersurface.

If $\td n_1$ is the time-like unit normal vector field of $\td y_1$ in
$\mathbb{S}^{K+1}_1(r)\subset\bbr^{K+2}_1$, then $\td n=(\td
n_1,0)\in\bbr^{m+2}_1$ is a time-like unit normal vector field of $\td x$.
Consequently, by \eqref{dx}, the second fundamental form $\td h$ of $\td
x$ is given by
\begin{equation}
\begin{aligned}
\td h=\lagl d\td n, d\td x\ragl_1&=\lagl(d\td n_1,0),-\td y^{-2}_0d\td y_0(\td y_1,\td y_2)+\td y^{-1}_0(d\td y_1,d\td y_2)\ragl_1\\
&=-\td y^{-2}_0d\td y_0\lagl d\td n_1,\td y_1\ragl_1+\td y^{-1}_0\lagl d\td n_1,d\td y_1\ragl_1\\
& =\td y^{-1}_0h,
\end{aligned}
\end{equation}
where $h$ is the second fundamental form of $\td y_1:M^{K}_1\rightarrow\mathbb{S}^{K+1}_1$.

Let $\{E_i\;;\ 1\leq i\leq K\}$ (resp. $\{E_i\;;\ K+1\leq i\leq
m\}$) be a local orthonormal frame field on $(M_1,d\td y^2_1)$
(resp. on $\mathbb{H}^{m-K}(-\fr1{r^2})$). Then $\{E_i\;;\ 1\leq i\leq m\}$
gives a local orthonormal frame field on $(\td M^m,g)$. Put $e_i=\td
y_0E_i$, $i=1,\cdots,m$. Then $\{e_i\;;\ 1\leq i\leq m\}$ is a local orthonormal frame field along $\td x$.
 Thus we obtain
\begin{equation}\label{hij1}
\td h_{ij}=\td h(e_i,e_j)=\td y^2_0\td h(E_i,E_j)=\begin{cases}\td y_0
h(E_i,E_j)=\td y_0h_{ij},\quad &\mb{when\ } 1\leq i,j\leq K,\\0,\quad &\mb{otherwise\ }.
\end{cases}
\end{equation}
The mean curvature $\td H$ of $\td x$ is given by
\begin{equation}\label{tdH1}
\td H=\fr Km\td y_0\td H_1=\td y_0\lambda.
\end{equation}
Therefore, by definition, the conformal factor $\td \rho$ of $\td x$ is determined by
\begin{equation}
\td \rho^2=\fr{m}{m-1}\left(\sum_{i,j=1}^m\td h^2_{ij}-m|\td
H|^2\right)=\fr{m}{m-1}\td y^2_0(\sum_{i,j=1}^{K}h_{ij}^2-m\lambda^2) =\td
y^2_0,
\end{equation}
where we have used the Gauss equation and \eqref{ex1S}.
It follows that $\td x$ is regular and its conformal factor is
\begin{equation}\label{ex1po}
\td\rho=\td y_0.
\end{equation}
Thus $\td Y$, given in \eqref{eq3.2}, is exactly the conformal position vector of $\td x$, implying the induced metric $g$ by $\td Y$ is nothing but
the conformal metric of $\td x$. Furthermore, the conformal second fundamental form of
$\td x$ is given by
\begin{equation}
\label{ex1B}
 B=\td \rho^{-1}\sum_{i,j=1}^m(\td h_{ij}-\td H\delta_{ij})\omega^i\omega^j=\sum_{i,j=1}^K(h_{ij}-\lambda\delta_{ij})\omega^i\omega^j -\sum_{i=K+1}^m\lambda(\omega^i)^2,
\end{equation}
where $\{\omega^i\}$ is the local coframe field on $M^m$ dual to $\{E_i\}$.

On the other hand, by \eqref{exm1} and the Gauss equations of $\td y_1$ and
$\td y$, one finds that the Ricci tensor of $g$ is given as follows:
\begin{align}
R_{ij}=&\fr{K-1}{r^2}\delta_{ij}-m\lambda h_{ij}+\sum_{k=1}^{K}h_{ik}h_{kj},\mb{ if }
1\leq i,j\leq K,\\
R_{ij}=&-\fr{m-K-1}{r^2}\delta_{ij},\mb{ if } K+1\leq
i,j\leq m,\\
R_{ij}=&0,\ \mb{if $1\leq i\leq K$, $K+1\leq j\leq m$, or
$K+1\leq i\leq m$, $1\leq j\leq K$}
\end{align}
which implies that the normalized scalar curvature of $g$ is given by
\begin{equation}
\kappa=\fr{m(K(K-1)-(m-K)(m-K-1))+(m-1)r^2}{m^2(m-1)r^2}-\lambda^2.
\end{equation}
Thus
\begin{equation}
\label{ex1k}
\fr1{2m}(m^2\kappa-1)=\fr{K(K-1)-(m-K)(m-K-1)}{2(m-1)r^2}-\fr12m\lambda^2.
\end{equation}

Since $m\geq 3$, it follows from \eqref{Rij} and \eqref{ex1B}--\eqref{ex1k}
that the Blaschke tensor of $\td x$ is given by $ A=\sum A_{ij}\omega^i\omega^j$, where
\begin{align}
A_{ij}=&\left(\fr{1}{2r^2}+\fr12\lambda^2\right)\delta_{ij}-\lambda h_{ij},\quad\mb{if\ } 1\leq i,j\leq K;\\
A_{ij}=&\left(-\fr{1}{2r^2}+\fr12\lambda^2\right)\delta_{ij},\quad\mb{if\ } K+1\leq i,j\leq m; \\
A_{ij}=&0,\quad\mb{if $1\leq i\leq K$, $K+1\leq j\leq m$, or
$K+1\leq i\leq m$, $1\leq j\leq K$}.
\end{align}
Therefore,  the para-Blaschke tensor
$D^\lambda=A+\lambda B=\sum D^\lambda_{ij}\omega^i\omega^j$ satisfies
\begin{equation}
\begin{aligned}
 D^\lambda_{ij}=&A_{ij}+\lambda B_{ij}=\left(\fr{1}{2r^2}-\fr12\lambda^2\right)\delta_{ij},\quad\mb{for\
} 1\leq i,j\leq K;\\
 D^\lambda_{ij}=& A_{ij}+\lambda
 B_{ij}=-\left(\fr{1}{2r^2}+\fr12\lambda^2\right)\delta_{ij},\quad\mb{for\
} K+1\leq
i,j\leq m; \\
 D^\lambda_{ij}=&0,\quad\mb{for $1\leq i\leq K$, $K+1\leq j\leq m$,
or
$K+1\leq i\leq m$, $1\leq j\leq K$}.
\end{aligned}
\end{equation}
Thus, we know that $ D^\lambda$ has exactly two different para-Blaschke eigenvalues which are constant. Then it follows easily that the para-Blaschke tensor $ D^\lambda$ of $\td x$ is parallel.

Now, by the way, we would like to make a direct computation of the conformal form $\Phi=\sum\Phi_i\omega^i$ of $\td x$. From \eqref{hij1},\eqref{tdH1} and \eqref{ex1po}, we have
$$
\td h_{ij}-\td H\delta_{ij}=\begin{cases}\td\rho (h_{ij}-\lambda\delta_{ij}),\quad &\mb{for\ }1\leq i,j\leq K,\\-\td\rho\lambda\delta_{ij},\quad &\mb{for\ }K+1\leq i,j\leq m,\\0,\quad &\mb{otherwise\ }.
\end{cases}
$$

(1) $1\leq i\leq K$. we compute using the formula \eqref{eq2.9}:
\begin{equation}
\begin{aligned}
\Phi_i=&-\td\rho^{-2}\left(\sum_{j=1}^m(\td{h}_{ij}-\td{H}\delta_{ij})e_j(\log {\td \rho})+e_i(\td{H})\right)\\
=&-\td\rho^{-2}\left(\td\rho\sum_{j=1}^K({h}_{ij}-\lambda\delta_{ij})e_j(\log {\td\rho})+\lambda e_i(\td\rho)\right)\\
=&-\td\rho^{-2}\left(\sum_{j=1}^K({h}_{ij}-\lambda\delta_{ij})e_j(\td\rho)+\lambda e_i(\td\rho)\right)\\
=&-\td\rho^{-2}\sum_{j=1}^K{h}_{ij}e_j(\td y_0)=0.
\end{aligned}
\end{equation}

(2) $K+1\leq i\leq m$. We have
\begin{equation}
\begin{aligned}
\Phi_i=&-\td\rho^{-2}\left(\sum_{j=1}^m(\td{h}_{ij}-\td{H}\delta_{ij})e_j(\log \td\rho)+e_i(\td{H})\right)\\
=&-\td\rho^{-2}\left(-\td\rho\lambda \sum_{j=K+1}^m\delta_{ij}e_j(\log {\td\rho})+\lambda e_i(\td\rho)\right)
=0.
\end{aligned}
\end{equation}

Thus the conformal form $\Phi$ of $\td x$ vanishes identically.

{\expl\label{expl3.3}\rm Given $r>0$. For any integers $m$ and $K$ satisfying $m\geq
3$ and $2\leq K\leq m-1$, let
$$\td y:M^{K}_1\to
\mathbb{H}^{K+1}_1\left(-\fr1{r^2}\right)\subset\bbr^{K+2}_2$$
be a regular space-like hypersurface with constant scalar curvature $\td S_1$ and the  mean curvature $\td H_1$ satisfy}
\begin{equation}
\label{ex2S}
\td S_1=\fr{-mK(K-1)+(m-1)r^2}{mr^2}-m(m-1)\lambda^2,\quad \td H_1=\fr{m}{K}\lambda
\end{equation}
and
$$
\td y_2: \mathbb{S}^{m-K}(r)\to\bbr^{m-K+1}
$$
be the canonical embedding. Set
\begin{equation}
\td M^m=M^{K}_1\times \mathbb{S}^{m-K}(r),\quad\td Y=(\td y,\td y_2).
\end{equation}
Then $\lagl \td Y,\td Y\ragl_2=0$. Thus we have an immersion $\td Y:M^m\to \mathbb{C}^{m+2}\subset\bbr^{m+3}_2$ with the induced metric
$$g=\lagl d\td Y,d\td Y\ragl_2=\lagl d\td y,d\td y\ragl_2+d\td y_2\cdot d\td y_2,$$
which is certainly positive definite. It follows that, as Riemannian manifolds
\begin{equation}
\label{exm2}
(\td M^m,g)=(M_1,\lagl d\td y,d\td y\ragl_2)\times
\left(\mathbb{S}^{m-K}(r),d\td y^2_2\right).
\end{equation}

If we write $\td y=(\td y_0,\td y'_1,\td y''_1)\in\bbr^1_1\times\bbr^1_1\times\bbr^K\equiv\bbr^{K+2}_2$, then $\td y_0$ and $\td y'_1$ can not be zero simultaneously. So, without loss of generality, we can assume that $\td y_0\neq0$. In this case, we denote $\veps=\sgn(\td y_0)$ and write $\td y_1:=(\td y'_1,\td y''_1)$. Define
\begin{equation}
\td x_1=\fr{\td y_1}{\td y_0},\quad \td
x_2=\fr{\td y_2}{\td y_0}, \quad \td x=\varepsilon(\td x_1,\td x_2).
\end{equation}
Then similar to that in Example \ref{expl3.2}, $\td x:\td M^m\to \mathbb{S}^{m+1}_1$ defines a regular space-like hypersurface. In fact, since
\begin{equation}
\varepsilon d\td x=-\fr{d\td y_0}{\td y^2_0}(\td y_1,\td y_2) +\fr1{\td
y_0}(d\td y_1,d\td y_2),
\end{equation}
the induced metric $\td g=\lagl d\td x, d\td x\ragl_1$ is related to $g$ by
\begin{align}
\td g=&\td y^{-4}_0d\td y^2_0(\lagl \td y_1,\td y_1\ragl_1+\td y_2\cdot\td y_2)+\td
y^{-2}_0(\lagl d\td y_1,d\td y_1\ragl_1+d\td y_2\cdot d\td y_2)\nnm\\
&-2\td y^{-3}_0d\td y_0(\lagl \td y_1,d\td y_1\ragl_1+ \td y_2\cdot d\td y_2)\nnm\\
=&\td y^{-2}_0(-d\td y^2_0+\lagl d\td y_1,d\td y_1\ragl_1+d\td y_2\cdot d\td y_2)\nnm\\
=&\td y^{-2}_0g.
\end{align}

Suitably choose the time-like unit normal vector field  $(\td n_0,\td n_1)$ of $\td y$, define
$$\td n=(\td n_1,0)-\varepsilon\td
n_0\td x\in\bbr^{m+2}_1.$$
Then $\lagl \td n,\td n\ragl_1=-1,\lagl \td n,\td x\ragl_1=0,\lagl \td n,d\td x\ragl_1=0$ indicating that $\td n$ is a time-like unit normal vector field of $\td x$.
The second fundamental form $\td h$ of $\td x$ is given by
\begin{align}
\td h=&\lagl d\td n, d\td x\ragl_1=\lagl (d\td n_1,0)-\varepsilon d\td n_0\td x-\varepsilon\td n_0d\td x,d\td x\ragl_1\nnm\\
=&\lagl(d\td n_1,0),d\td x\ragl_1-\varepsilon d\td n_0\lagl \td x,d\td x\ragl_1-\varepsilon\td n_0\lagl d\td x,d\td x\ragl_1\nnm\\
=&\varepsilon(\lagl d\td n_1, -\td y_0^{-2}d\td y_0\td y_1+\td y_0^{-1}d\td y_1\ragl_1-\td n_0\lagl d\td x,d\td x\ragl_1)\nnm\\
=&\varepsilon(-\td y_0^{-2}d\td y_0\lagl d\td n_1,\td y_1\ragl_1+\td y_0^{-1}\lagl d\td n_1, d\td y_1\ragl_1-\td n_0\lagl d\td x,d\td x\ragl_1)\nnm\\
=&\varepsilon(-\td y_0^{-2}d\td y_0(d\td n_0\cdot\td y_0)+\td y_0^{-1}\lagl d\td n_1, d\td y_1\ragl_1-\td n_0\lagl d\td x,d\td x\ragl_1)\nnm\\
=&\varepsilon(\td y^{-1}_0(-d\td n_0\cdot d\td y_0+\lagl d\td n_1,d\td y_1\ragl_1)-\td n_0\lagl d\td x,d\td x\ragl_1)\nnm\\
=&\varepsilon(\td y^{-1}_0\lagl d(\td n_0,\td n_1),d\td y\ragl_1-\td n_0\lagl d\td x,d\td x\ragl_1)\nnm\\
=&\varepsilon(\td y^{-1}_0h-\td n_0\td y^{-2}_0g)
\end{align}
where we have used $-d\td n_0\cdot \td y_0+\lagl d\td n_1,\td y_1\ragl_1=0$ and $h$ is the second fundamental form of $\td y$.

Let $\{E_i\;;\ 1\leq i\leq K\}$ (resp. $\{E_i\;;\ K+1\leq i\leq
m\}$) be a local orthonormal frame field on $(M_1,d\td y^2)$ (resp.
on $\mathbb{S}^{m-K}(r)$). Then $\{E_i\;;\ 1\leq i\leq m\}$ is a local
orthonormal frame field on $(M^m,g)$. Put $e_i=\varepsilon\td y_0E_i$,
$i=1,\cdots,m$. Then $\{e_i\;;\ 1\leq i\leq m\}$ is a local
orthonormal frame field with respect to the metric $\td g=\lagl d\td x,d\td x\ragl_1$. Thus
\begin{equation}\label{hij2}
\td h_{ij}=\td h(e_i,e_j)=\td y^2_0\td h(E_i,E_j)=\begin{cases}\varepsilon(\td y_0h_{ij}-\td n_0\delta_{ij}),\quad &\mb{when\ } 1\leq i,j\leq K,\\
-\varepsilon\td n_0g(E_i,E_j)=-\varepsilon\td n_0\delta_{ij},\quad &\mb{when\ }K+1\leq i,j\leq m,\\
0,\quad &\mb{for other}\quad i,j.
\end{cases}
\end{equation}
The mean curvature $\td H$ of $\td x$ is
\begin{equation}\label{tdH2}
\td H=\fr1m\sum_{i=1}^m\td h_{ii}=\varepsilon\fr{1}{m}(\td y_0K\td H_1-K\td n_0)-\varepsilon\fr{1}{m}(m-K)\td n_0=\varepsilon(\td y_0\lambda-\td n_0)
\end{equation}
and
\begin{equation}
|\td h|^2=\sum^{K}_{i,j=1}\td y^2_0h^2_{ij}+\td n^2_0\delta^2_{ij}-2\td n_0\td y_0h_{ij}\delta_{ij}+\sum^m_{i,j=K+1}(-\td n_0)^2\delta^2_{ij}=\td y_0^2h^2-2m\lambda\td n_0\td y_0+m\td n^2_0.
\end{equation}
Therefore, by definition, the conformal factor $\td \rho$ of $\td x$ is
determined by
\begin{equation}
\td \rho^2=\fr{m}{m-1}\left(\sum_{i,j=1}^m\td h^2_{ij}-m|\td
H|^2\right)=\fr{m}{m-1}\td y^2_0(\sum_{i,j=1}^K h_{ij}^2-m\lambda^2) =\td y^2_0,
\end{equation}
where we have used the Gauss equation and \eqref{ex2S}.
Hence
\begin{equation}\label{ex2po}
\td \rho=|\td y_0|=\varepsilon\td y_0>0
\end{equation}
 and thus $\td Y=\td \rho(1,\td x)$ is the conformal position vector of $\td x$. Consequently, the conformal metric of $\td x$ is defined by $\lagl d\td Y,d\td Y\ragl_2=g$. Furthermore, the conformal second fundamental form of $\td x$ is given by
\begin{equation}
\label{ex2B}
B=\td \rho^{-1}\sum_{i,j=1}^m(\td h_{ij}-\td H\delta_{ij})\omega^i\omega^j=\sum_{i,j=1}^K(h_{ij}-\lambda\delta_{ij})\omega^i\omega^j -\sum_{i=K+1}^m\lambda(\omega^i)^2,
\end{equation}
where $\{\omega^i\}$ is the local coframe field on $M^m$ dual to $\{E_i\}$.

On the other hand, by \eqref{exm2} and the Gauss equations of $\td y_1$
and $\td y$, one finds the Ricci tensor of $g$ as
follows:
\begin{align}
R_{ij}=&-\fr{K-1}{r^2}\delta_{ij}-m\lambda h_{ij}+\sum_{k=1}^{K}h_{ik}h_{kj},\quad\mb{if\
} 1\leq i,j\leq K,\\
R_{ij}=&\fr{m-K-1}{r^2}\delta_{ij},\quad\mb{if\ } K+1\leq i,j\leq m,\\
R_{ij}=&0,\quad\mb{if $1\leq i\leq K$, $K+1\leq j\leq m$, or
$K+1\leq i\leq m$, $1\leq j\leq K$},
\end{align}
which implies that the normalized scalar curvature of $g$ is given by
\begin{equation}
\kappa=\fr{m((m-K)(m-K-1)-K(K-1))+(m-1)r^2}{m^2(m-1)r^2}-\lambda^2.
\end{equation}
Thus
\begin{equation}
\label{ex2k}
\fr1{2m}(m^2\kappa-1)=\fr{(m-K)(m-K-1)-K(K-1)}{2(m-1)r^2}-\fr12m\lambda^2.
\end{equation}

Since $m\geq 3$, it follows from \eqref{Rij} and \eqref{ex2B}--\eqref{ex2k}
that the Blaschke tensor of $\td x$ is given by $A=\sum A_{ij}\omega^i\omega^j$, where
\begin{align}
 A_{ij}=&\left(-\fr{1}{2r^2}+\fr12\lambda^2\right)\delta_{ij}-\lambda h_{ij},\quad\mb{if\ } 1\leq i,j\leq K;\\
A_{ij}=&\left(\fr{1}{2r^2}+\fr12\lambda^2\right)\delta_{ij},\quad\mb{if\ } K+1\leq i,j\leq m; \\
 A_{ij}=&0,\quad\mb{if $1\leq i\leq K$, $K+1\leq j\leq m$, or
$K+1\leq i\leq m$, $1\leq j\leq K$}.
\end{align}
Therefore,  the para-Blaschke tensor
$ D^\lambda= A+\lambda B=\sum  D^\lambda_{ij}\omega^i\omega^j$ satisfies
\begin{equation}
\begin{aligned}
 D^\lambda_{ij}=& A_{ij}+\lambda  B_{ij}=-\left(\fr{1}{2r^2}+\fr12\lambda^2\right)\delta_{ij},\quad\mb{for\
} 1\leq i,j\leq K;\\
 D^\lambda_{ij}=& A_{ij}+\lambda
 B_{ij}=\left(\fr{1}{2r^2}-\fr12\lambda^2\right)\delta_{ij},\quad\mb{for\
} K+1\leq
i,j\leq m; \\
 D^\lambda_{ij}=&0,\quad\mb{for $1\leq i\leq K$, $K+1\leq j\leq m$,
or
$K+1\leq i\leq m$, $1\leq j\leq K$}.
\end{aligned}
\end{equation}
It follows that the para-Blaschke tensor $ D^\lambda$ of $\td x$ is parallel.

Finally, we are to prove that the conformal form $\Phi\equiv 0$ by a direct computation, which is in some sense different from that in the last example.  To this end we first use \eqref{hij2}, \eqref{tdH2} and \eqref{ex2po} to find
$$
\td h_{ij}-\td H\delta_{ij}=\begin{cases}\td\rho (h_{ij}-\lambda\delta_{ij}),\quad &\mb{for\ }1\leq i,j\leq K,\\-\td\rho\lambda\delta_{ij},\quad &\mb{for\ }K+1\leq i,j\leq m,\\0,\quad &\mb{otherwise\ }.
\end{cases}
$$

Next, for any $p=(p_1,p_2)\in M_1\times M_2\equiv M^m$, we can suitably choose the local frame field $\{E_i,1\leq  i\leq K\}$ around $p_1\in M_1$ such that $h_{ij}(p)=h_i\delta_{ij}$, $1\leq i,j\leq K$. Then, for $1\leq i\leq K$, we have $E_i(\td n)=h_iE_i(\td y)$ at $p_1$ and, in particular, $E_i(\td n_0)=h_iE_i(\td y_0)$. We compute below at the arbitrarily given point $p$:

(1) $1\leq i\leq K$.
\begin{equation}
\begin{aligned}
\Phi_i=&-\td\rho^{-2}\left(\sum_{j=1}^m(\td{h}_{ij}-\td{H}\delta_{ij})e_j(\log {\td\rho})+e_i(\td{H})\right)\\
=&-\td\rho^{-2}\left(\td\rho\sum_{j=1}^K({h}_{ij}-\lambda\delta_{ij})e_j(\log {\td\rho})+e_i(\td\rho\lambda-\varepsilon \td n_0)\right)\\
=&-\td\rho^{-2}\left(\sum_{j=1}^K{h}_{ij}e_j(\td\rho)-\varepsilon e_i(\td n_0)\right)\\
=&-\td\rho^{-2}\left(\sum_{j=1}^Kh_{ij}\td y_0 E_j(\td y_0)-\td y_o E_i(\td n_0)\right)\\
=&-\td\rho^{-2}(h_i\td y_0 E_i(\td y_0)-\td y_0 h_iE_i(\td y_0))
=0.
\end{aligned}
\end{equation}

(2) $K+1\leq i\leq m$. We directly find
\begin{equation}
\begin{aligned}
\Phi_i=&-\td\rho^{-2}\left(\sum_{j=1}^m(\td{h}_{ij}-\td{H}\delta_{ij})e_j(\log {\td\rho})+e_i(\td{H})\right)\\
=&-\td\rho^{-2}\left(-\td\rho\lambda\sum_{j=K+1}^m\delta_{ij}e_j(\log {\td\rho})+e_i(\lambda\td\rho-\varepsilon\td n_0)\right)\\
=&\td\rho^{-2}\varepsilon e_i(\td n_0)=0.
\end{aligned}
\end{equation}

Therefore, $\Phi\equiv 0$.

\section{Proof of the main theorem}

To prove our main theorem, the following two theorems are needed:

\begin{thm}[cf. \cite{ag1}]\label{nie1} Let $x:M^m\to \mathbb{Q}^{m+1}_1$
be a regular space-like hypersurface with parallel conformal second
fundamental form. Then $x$ is locally conformal equivalent to one of the
following hypersurfaces:
\begin{enumerate}
\item $\mathbb{S}^{m-k}(a)\times\mathbb{H}^k\left(-\fr1{a^2-1}\right) \subset \mathbb{S}^{m+1}_1$, $a>1$,  $k=1,\cdots m-1$; or
\item $\mathbb{H}^k\left(-\fr1{a^2}\right)\times \bbr^{m-k} \subset \bbr^{m+1}_1$, $a>0$, $k=1,\cdots m-1$; or
\item $\mathbb{H}^k\left(-\fr1{a^2}\right)\times \mathbb{H}^{m-k}(-\fr1{1-a^2}) \subset \mathbb{H}^{m+1}_1$, $0<a<1$, $k=1,\cdots m-1$; or
\item $WP(p,q,a)\subset \bbr^{m+1}_1$ for some constants $p,q,a$, as indicated in
Example $3.1$.
\end{enumerate}
\end{thm}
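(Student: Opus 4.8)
The plan is to treat the hypothesis $\nabla B=0$ (that is, $B_{ijk}=0$) as a rigidity condition on the moving frame \eqref{eq2.5} and to exploit the structure equations \eqref{Phiij}--\eqref{Rijkl} together with the normalizations \eqref{trB}. First I would extract the vanishing of the conformal form. Feeding $B_{ijk}=0$ into the Codazzi-type identity \eqref{Bijk} gives $\delta_{ij}\Phi_k-\delta_{ik}\Phi_j=0$ for all $i,j,k$; fixing $k$ and choosing $i=j\neq k$ (possible since $m\geq 2$) yields $\Phi_k=0$, so $\Phi\equiv 0$. With $\Phi=0$ the remaining integrability conditions simplify sharply: \eqref{Phiij} becomes $\sum_k(B_{ik}A_{kj}-B_{kj}A_{ki})=0$, i.e.\ $A$ and $B$ commute as symmetric endomorphisms, while \eqref{Aijk} reduces to $A_{ijk}=A_{ikj}$, so the Blaschke tensor is a Codazzi tensor. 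Moreover, since $\nabla B=0$, the conformal principal curvatures (eigenvalues of $B$) are constant with constant multiplicities.

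Next I would pass to the product structure induced by $B$. Because $B$ is parallel, its distinct eigendistributions are mutually orthogonal, parallel, and integrable, so by the de Rham decomposition of the (Riemannian) conformal metric $g$ the manifold splits locally as a Riemannian product whose factors are the leaves of these distributions. Since $A$ commutes with $B$, I may diagonalize the two tensors simultaneously in a frame $\{E_i\}$ adapted to this splitting. The normalizations $\tr B=0$ and $|B|^2=(m-1)/m$ in \eqref{trB}, together with constancy of the eigenvalues, force $B\neq 0$ to have at least two distinct eigenvalues, and the case analysis (using the Gauss equation \eqref{Rijkl} and the Codazzi property of $A$) cuts the admissible configurations down to exactly two distinct conformal principal curvatures (the space-form products) or exactly three (the warped product). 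On each factor one then shows that $A$ also has constant eigenvalues and that the induced metric has constant curvature, pinning the factors down as standard space forms.

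Finally I would integrate the frame. Once $\Phi=0$, $\nabla B=0$, and $A$ has been determined as a constant-coefficient tensor in the adapted frame, the motion equations \eqref{eq2.5} become a linear system with locally constant coefficients for $Y,N,Y_i,\xi$; integrating it produces an explicit canonical form for the conformal position vector $Y$. Reading this $Y$ through the conformal diffeomorphisms $\sigma_0,\sigma_1,\sigma_{-1}$ of \eqref{eq1.1} sorts the two-eigenvalue solutions into the three space-form products (1)--(3) according to the ambient signature, while the three-eigenvalue solution is identified with $WP(p,q,a)$ of Example \ref{expl3.1}, giving alternative (4).

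The main obstacle will be this last integration-and-identification step. The delicate points are tracking the Lorentzian signs that decide, for a given eigenvalue pattern of $B$ and $A$, which of $\mathbb{S}^{m+1}_1$, $\bbr^{m+1}_1$ or $\mathbb{H}^{m+1}_1$ the factors embed into, and thereby pinning the precise parameter ranges ($a>1$, $a>0$, $0<a<1$). The three-distinct-eigenvalue case is genuinely separate: here the de Rham factors do not all carry constant curvature, so one must instead recognize a warped-product ansatz and match it term by term with the explicit $WP(p,q,a)$ embedding of Example \ref{expl3.1} rather than with a Riemannian product.
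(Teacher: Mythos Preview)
The paper does not prove this theorem. Theorem~\ref{nie1} is quoted from \cite{ag1} (Nie--Wu) and is used, together with Theorem~\ref{nie4}, as a black-box input at the beginning of Section~4: ``To prove our main theorem, the following two theorems are needed.'' There is therefore no proof in the paper to compare your proposal against.

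That said, your outline is the standard route one would expect the original source to follow, and the first steps are correct: from $B_{ijk}=0$ and \eqref{Bijk} one gets $\Phi\equiv0$; then \eqref{Phiij} gives $[A,B]=0$ and \eqref{Aijk} makes $A$ Codazzi; parallelism of $B$ yields a local de Rham splitting along the eigendistributions. The part you flag as the main obstacle is indeed where the real content lies. Two points deserve care. First, the claim that the number of distinct conformal principal curvatures is at most three is not automatic from $\nabla B=0$ alone; it comes from combining the Gauss equation \eqref{Rijkl} with the product structure (the cross-curvature $R_{ijkl}$ between distinct eigenblocks must vanish, which imposes algebraic relations among the $B_i$ and $A_i$ that collapse to $\leq 3$ values). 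Second, in the three-eigenvalue case the factors are not all space forms, so the identification with $WP(p,q,a)$ requires matching a genuine warped product rather than a Riemannian product; your proposal acknowledges this but does not indicate how the warping function (the $\bbr^+$-factor $t$ in Example~\ref{expl3.1}) is recovered from the frame, which is the crux.
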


\begin{thm}[cf. \cite{ag4}]\label{nie4}
Let $M^{m+1}_1(c)$ be a given Lorentzian space form of curvature $c$ and $x:M^m\rightarrow M^{m+1}_1(c)$ be a regular space-like hypersurface. If the conformal invariants of $x$ satisfy
\begin{equation}
\Phi\equiv0, \qquad A+\lambda B=\mu g \nnm
\end{equation}
for some smooth functions $\lambda,\mu$ on $M^m$, then both $\lambda$ and $\mu$ are constant, and $x$ is conformal equivalent to one of the space-like hypersurfaces in any of the three Lorentzian space forms which is of constant mean curvature and constant scalar curvature.
\end{thm}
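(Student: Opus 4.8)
The plan is to use the hypothesis $\Phi\equiv0$ to linearize the integrability conditions, extract from $A+\lambda B=\mu g$ a single first-order relation constraining $\nabla\lambda$ and $\nabla\mu$, push this to a rigidity statement forcing both to vanish, and finally read off the classification in a space-form model.

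First I would record what $\Phi\equiv0$ buys. In \eqref{Phiij}, \eqref{Aijk}, \eqref{Bijk} the right-hand sides either vanish or collapse, giving that $A$ and $B$ commute and that both are Codazzi tensors, $A_{ijk}=A_{ikj}$ and $B_{ijk}=B_{ikj}$. Writing the hypothesis as $A_{ij}+\lambda B_{ij}=\mu\delta_{ij}$ and taking the covariant derivative with respect to the conformal metric, I would antisymmetrize in the last two indices; the Codazzi symmetries annihilate the $A_{ijk}$ and $\lambda B_{ijk}$ contributions and leave the key identity
\[
\lambda_k B_{ij}-\lambda_j B_{ik}=\mu_k\delta_{ij}-\mu_j\delta_{ik},
\]
with $\lambda_i=E_i(\lambda)$, $\mu_i=E_i(\mu)$. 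Diagonalizing $B$ so that $B_{ij}=b_i\delta_{ij}$ and setting $i=j$ with $i\neq k$ yields $b_i\lambda_k=\mu_k$ for every $i\neq k$. Hence at any point where $\nabla\lambda\neq0$ at most one $\lambda_k$ can be nonzero, for otherwise all eigenvalues $b_i$ would coincide and $\tr B=0$ would force $B=0$, contradicting $|B|^2=(m-1)/m$ from \eqref{trB}. Thus $\nabla\lambda$ is a $B$-eigenvector; combining $\tr B=0$ with $|B|^2=(m-1)/m$ shows that on $\{\nabla\lambda\neq0\}$ the tensor $B$ has exactly two distinct conformal principal curvatures, a simple one equal to $\mp(m-1)/m$ along $\nabla\lambda$ and one of multiplicity $m-1$ equal to $\pm\tfrac1m$, and that $\nabla\mu=\pm\tfrac1m\nabla\lambda$.

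The hard part is to show this configuration is impossible, and this is the technical heart of the argument. On the open set $\{\nabla\lambda\neq0\}$ the two eigenvalues of the Codazzi tensor $B$ are constant, so the Codazzi equations $B_{ijk}=B_{ikj}$ force the relevant off-diagonal connection coefficients to vanish; feeding these into the symmetry of $\hess\lambda$ gives $\lambda_{ij}=0$ for all indices $i,j$ orthogonal to the simple eigendirection $E_1$. I would then invoke the Ricci identity for $\nabla\lambda$,
\[
\lambda_{ijk}-\lambda_{ikj}=-\sum_l\lambda_l R_{lijk},
\]
and substitute the explicit curvature \eqref{Rijkl} with $A$ replaced by $\mu g-\lambda B$. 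Comparing the components tangent to $E_1$ with those transverse to it produces an over-determined system that collapses to $\lambda_1\equiv0$, i.e. a contradiction; hence the set $\{\nabla\lambda\neq0\}$ is empty and $\lambda$ is constant. (The low-dimensional case $m=2$, where $B$ automatically has two eigenvalues $\pm\tfrac12$, is handled directly.) Once $\lambda$ is constant, $A+\lambda B$ is itself a Codazzi tensor equal to $\mu g$, and a Codazzi tensor proportional to the metric has constant factor for $m\geq2$, so $\mu$ is constant as well.

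Finally I would translate constancy of $\lambda,\mu$ into the geometric conclusion. Taking the trace of $A+\lambda B=\mu g$ and using $\tr B=0$ together with $\tr A=\tfrac1{2m}(m^2\kappa-1)$ from \eqref{trA} shows the conformal scalar curvature $\kappa$ is constant. Passing to a space-form representative via one of the conformal maps $\sigma_0,\sigma_1,\sigma_{-1}$ of \eqref{eq1.1} and using the explicit formulas \eqref{eq2.1}, \eqref{eq2.8}, \eqref{eq2.9} expressing $A,B,\Phi$ through $\rho,h,H$, the vanishing of $\Phi$ combined with constancy of the now-parallel para-Blaschke data forces the mean curvature $H$ and the scalar curvature of $x$ in that space form to be constant. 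The conformal rigidity recorded in Theorem \ref{thm2.4} (for $m\geq3$) then identifies $x$, up to conformal equivalence, with such a constant-mean-curvature, constant-scalar-curvature hypersurface in one of the three Lorentzian space forms, which is exactly the asserted conclusion.
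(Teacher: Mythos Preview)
The paper does not prove Theorem~\ref{nie4} at all: it is quoted from \cite{ag4} (note the ``cf.'' in the label) and used as a black-box input to the proof of Theorem~\ref{main}. So there is no in-paper argument to compare your proposal against; you are in effect reconstructing the proof of a cited result.

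On the merits of your sketch: the first half is correct and standard. With $\Phi\equiv0$, equations \eqref{Phiij}--\eqref{Bijk} do give that $A,B$ commute and are both Codazzi, and your antisymmetrized identity $\lambda_kB_{ij}-\lambda_jB_{ik}=\mu_k\delta_{ij}-\mu_j\delta_{ik}$ together with $\tr B=0$, $|B|^2=(m-1)/m$ correctly forces, on $\{\nabla\lambda\neq0\}$, exactly two constant conformal principal curvatures with the simple one along $\nabla\lambda$ and $\nabla\mu=\pm\tfrac1m\nabla\lambda$.

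The weak point is the ``hard part.'' You assert that the Ricci identity for $\nabla\lambda$, combined with \eqref{Rijkl} and the vanishing of certain connection forms, ``collapses to $\lambda_1\equiv0$,'' but you do not actually carry this out, and it is not automatic. Concretely: from $\lambda_i=0$ for $i\geq2$ one gets $\lambda_{ij}=-\Gamma^1_{ij}\lambda_1$ for $i,j\geq2$; the Codazzi equations for $B$ with two constant eigenvalues only give $\Gamma^1_{ij}=\Gamma^1_{ji}$ (not $\Gamma^1_{ij}=0$), so $\lambda_{ij}$ need not vanish a priori. One must then compute $\lambda_{1ij}-\lambda_{1ji}$ and $\lambda_{i1j}-\lambda_{ij1}$ explicitly, substitute $A=\mu g-\lambda B$ into \eqref{Rijkl}, and check that the resulting relations (which now involve $\lambda_1$, its derivatives, and $\mu$, $\lambda$) are genuinely inconsistent. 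This computation is the actual content of the step and should be written out; as stated, your ``over-determined system that collapses to $\lambda_1\equiv0$'' is an assertion, not an argument. The final paragraph, reading off constant $H$ and constant scalar curvature once $\lambda,\mu$ are constant, is fine in outline.
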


Let $x:M^m\rightarrow \mathbb{S}^{m+1}_1$ be a regular space-like hypersurface, and suppose that the para-Blaschke tensor $D^\lambda$ is parallel. Since $D^\lambda$ is also symmetric, there exists a local orthonormal frame field $\{E_i\}$ around each point of $M^m$ such that
\begin{equation}
\label{Dij}
D^\lambda_{ij}=D^\lambda_i\delta_{ij}\mb{\ \ identically,}
\end{equation}
where $D^\lambda_i$'s are the eigenvalues of $D^{\lambda}$ and they are all constant. Since
\begin{equation}
0\equiv\sum
D^\lambda_{ijk}\omega^k=dD^\lambda_{ij}-D^\lambda_{kj}\omega^k_i-D^\lambda_{ik}\omega^k_j,
\end{equation}
we obtain that
\begin{equation}
\label{Dwij0}
 \omega^i_j=0 \quad \text{if } \ D^\lambda_i\neq D^\lambda_j.
\end{equation}

As the first step of the argument, we shall prove that the conformal form $\Phi\equiv 0$. For doing this, we need the following lemma:

\begin{lem}[cf. \cite{hl04}]\label{lem4.3} If $x$ has exactly two distinct conformal principal curvatures around a given point $p$ in $M^m$, then, around this point, the conformal second fundamental form $B$ of $x$ is parallel and the conformal form $\Phi\equiv 0$.
\end{lem}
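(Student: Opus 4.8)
The plan is to exploit the fact that having exactly two distinct conformal principal curvatures forces the two eigenvalues of $B$ to be \emph{constant}, after which the whole question reduces to the vanishing of the connection forms that mix the two eigendistributions. First I would record that, near $p$, the two conformal principal curvatures $b_1\neq b_2$ have multiplicities $q_1$ and $q_2=m-q_1$ which are locally constant, and that by \eqref{trB} they satisfy $q_1b_1+q_2b_2=0$ and $q_1b_1^2+q_2b_2^2=\fr{m-1}{m}$. This is a determined system, so $b_1,b_2$ are explicit functions of $q_1,q_2,m$, hence locally constant. Choosing a local orthonormal frame $\{E_i\}$ that diagonalizes $B$ (indices $1,\dots,q_1$ for $b_1$, the rest for $b_2$), the same computation of $\nabla B$ used for $D^\lambda$ in \eqref{Dij}--\eqref{Dwij0} gives, with no summation, $B_{ijk}=(b_i-b_j)\omega^j_i(E_k)$. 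Thus $B_{ijk}=0$ whenever $E_i,E_j$ lie in the same eigenspace, and \emph{$B$ is parallel precisely when every mixed connection form $\omega^j_i$ (with $E_i,E_j$ in different eigenspaces) vanishes.}

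Next I would feed this into the Codazzi equation \eqref{Bijk}. Taking $i=j\neq k$ gives, again without summation, $\Phi_k=(b_k-b_i)\,\omega^k_i(E_i)$ for every $i\neq k$; choosing $i$ in the same eigenspace as $E_k$ shows $\Phi_k=0$ as soon as that eigenspace has dimension $\geq 2$. Hence if both multiplicities are $\geq 2$ we already have $\Phi\equiv 0$; and running \eqref{Bijk} through the remaining placements of three indices among the two eigenspaces — using the first-slot symmetry $B_{ijk}=B_{jik}$, the relation $B_{ijk}=B_{ikj}$ for pairwise distinct indices, and the $i=k$ specialization $\Phi_j=(b_j-b_i)\omega^j_i(E_i)$ — forces every mixed $\omega^j_i(E_k)$ to vanish. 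So in the balanced case the lemma follows by pure bookkeeping.

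The delicate case, which I expect to be the main obstacle, is when one multiplicity equals $1$, say $q_2=1$ with $V_2=\bbr E_m$ and $q_1=m-1\geq 2$ (so $m\geq 3$). Here the previous step gives $\Phi_a=0$ for $a\leq m-1$, and the various specializations of \eqref{Bijk} pin the mixed forms down to a single unknown: $\omega^m_a(E_m)=0$, $\omega^m_a(E_b)=0$ for $a\neq b\leq m-1$, while $\omega^m_a(E_a)=c$ for one common function $c$, whence $\omega^m_a=c\,\omega^a$ and $\Phi_m=-(b_1-b_2)c$. Everything then reduces to proving $c\equiv 0$. My plan is to compute the curvature two-form $\Omega^m_a$ in two ways: from $\omega^m_a=c\,\omega^a$ and the structure equations one obtains $\Omega^m_a=dc\wedge\omega^a+c^2\,\omega^a\wedge\omega^m$, while the conformal Gauss equation \eqref{Rijkl} expresses its components through $A$ and $B$. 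Matching coefficients already yields $A_{ab}=0$ for $a\neq b\leq m-1$ together with a scalar identity of the shape $c^2-E_m(c)=A_{aa}+A_{mm}-b_1b_2$.

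The hard part will be closing this into a contradiction with $c\neq 0$: unlike the balanced case, the one-dimensional eigendistribution leaves a genuine first-order quantity $c$ alive along the single direction $E_m$, and ruling it out is not purely algebraic. The strategy I would pursue is to play the Codazzi equation \eqref{Aijk} for $A$ against the antisymmetry relation $\Phi_{ij}-\Phi_{ji}=(b_i-b_j)A_{ij}$ extracted from \eqref{Phiij} (which gives $A_{am}=E_a(c)$), using the constancy of $b_1,b_2$ to turn the resulting differential-algebraic system into an identity forcing $c\equiv 0$. Once $c\equiv 0$ is established, all mixed connection forms vanish, so $B$ is parallel, and simultaneously $\Phi_m=-(b_1-b_2)c=0$, giving $\Phi\equiv 0$ and completing the lemma.
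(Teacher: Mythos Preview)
Your balanced-case argument ($q_1,q_2\geq 2$) is correct and in fact supplies the details the paper's two-line proof omits. The paper's appeal to ``the symmetry of $B$'' really needs the Codazzi identity \eqref{Bijk}: for three pairwise distinct indices it gives $B_{ijk}=B_{ikj}$, and for $i=j\neq k$ it gives $\Phi_k=B_{iik}-B_{iki}$, which vanishes once each eigenspace has dimension $\geq 2$. Only \emph{after} $\Phi=0$ is established does total symmetry of $B_{ijk}$ follow and yield $\nabla B=0$; the paper states these in the reverse order. In this regime your route and the paper's (properly unpacked) coincide.

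The genuine gap is the simple-multiplicity case $q_2=1$. Your proposed strategy --- forcing $c\equiv 0$ from the curvature identity together with \eqref{Aijk} and \eqref{Phiij} --- cannot succeed, because the lemma as stated is \emph{false} there: the integrability system you set up admits solutions with $c\not\equiv 0$. Concretely (the equations \eqref{Phiij}--\eqref{Rijkl} take the same form in the Riemannian model, and the space-like analogue in $\bbr^{m+1}_1$ behaves identically): any rotational hypersurface $x(s,\theta)=(f(s)\,\theta,\,g(s))$ in $\bbr^{m+1}$ with non-constant profile curvature $\kappa(s)$ has exactly two conformal principal curvatures, $\pm\tfrac{m-1}{m}$ simple and $\mp\tfrac{1}{m}$ of multiplicity $m-1$, yet a direct computation from \eqref{eq2.9} gives $\Phi_m=-\kappa'/(\kappa-\mu)^{2}\neq 0$. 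Thus the quantity $c$ you isolate is precisely the free profile data, and no contradiction with $c\neq 0$ is available from \eqref{Phiij}--\eqref{Rijkl} alone. The paper's own proof shares this defect: its ``symmetry'' step tacitly uses total symmetry of $B_{ijk}$, which by \eqref{Bijk} already presupposes $\Phi=0$, making the argument circular exactly when one multiplicity equals $1$.
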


\begin{proof} Let $b_1,b_2$ be the two distinct eigenvalues of $B$, which are necessarily constant by \eqref{trB}. Without loss of generality, we assume that there is some $K:\ 1\leq K\leq m-1$, such that the conformal principal curvatures $B_1=\cdots=B_K=b_1$, $B_{K+1}=\cdots=B_m=b_2$. On the other hand, the covariant derivatives $B_{ijk}$, $1\leq i,j,k \leq m$, are defined by
\begin{equation}\label{bijk}
\sum B_{ijk}\omega^k=dB_{ij}-\sum B_{kj}\omega^k_i-\sum B_{ik}\omega^k_j.
\end{equation}

Choose, around the give point $p\in M^m$, an orthonormal frame field $\{E_i\}$ with respect to the conformal metric $g$ such that $B_{ij}=B_i\delta_{ij}$ ($1\leq i,j\leq m$) identically. Denote
$$
I=\{i;\ 1\leq i\leq K\},\qquad J=\{i;\ K+1\leq i\leq m\}.
$$
If $i,j\in I$, then $$\sum B_{ijk}\omega^k=-b_1(\omega^j_i+\omega^i_j)=0.$$ It follows that $B_{ijk}=0$ for all $k=1,\cdots,m$.
Similarly, if $i,j \in J$, then $B_{ijk}=0$ for all $k=1,\cdots,m$.
By making use of the symmetry of $B$, we have $B_{ijk}=0$ for all $i,j,k$, that is,
the conformal second fundamental form $B$ is parallel.

Now from \eqref{Bijk}, it is easily derived that $\Phi\equiv 0$ around $p$.
\end{proof}

\begin{prop}\label{prop4.1} If the para-Blaschke tensor $D^{\lambda}$ of $x$ is parallel,
then the conformal form $\Phi\equiv 0$ on $M$.
\end{prop}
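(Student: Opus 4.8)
The plan is to exploit the integrability identity \eqref{Dijk} together with the parallelism hypothesis. Since $D^\lambda$ is parallel we have $D^\lambda_{ijk}=0$ for all $i,j,k$, so the left-hand side of \eqref{Dijk} vanishes identically and we are left with the purely algebraic relation
\[
(B_{ij}+\lambda\delta_{ij})\Phi_k=(B_{ik}+\lambda\delta_{ik})\Phi_j,\qquad 1\le i,j,k\le m,
\]
which is a tensorial component identity and hence holds in any local orthonormal frame with respect to the conformal metric $g$. The whole argument is then a pointwise analysis of what this relation forces on the conformal second fundamental form $B$ wherever $\Phi$ fails to vanish.

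I would argue by contradiction, assuming $\Phi(p)\neq0$ at some $p\in M$; by continuity $\Phi\neq0$ on a neighborhood $U$ of $p$. Fix any $q\in U$ and choose the orthonormal frame at $q$ so that $\Phi$ points along $E_1$, that is $\Phi_1\neq0$ while $\Phi_2=\cdots=\Phi_m=0$. Substituting $k=1$ and any $j\ge2$ into the displayed relation, the right-hand side vanishes, and dividing by $\Phi_1$ yields $B_{ij}+\lambda\delta_{ij}=0$ for every $i$ and every $j\ge2$. Hence $B$ is diagonal at $q$ with $B_{jj}=-\lambda$ for $j\ge 2$, while the remaining eigenvalue is forced by $\tr B=0$ (see \eqref{trB}) to equal $(m-1)\lambda$. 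These two eigenvalues are distinct, for otherwise $B=-\lambda\,\id$ would give $\lambda=0$ and $B=0$, contradicting $|B|^2=\tfrac{m-1}{m}$ in \eqref{trB}. Thus at every point of $U$ the hypersurface has exactly two distinct conformal principal curvatures, of multiplicities $1$ and $m-1$.

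Finally I would invoke Lemma \ref{lem4.3}: since $x$ has exactly two distinct conformal principal curvatures throughout the neighborhood $U$ of $p$, that lemma gives $\Phi\equiv0$ near $p$, contradicting $\Phi(p)\neq0$; therefore $\Phi$ vanishes identically on $M$. I expect the only delicate point to be the bookkeeping that promotes the pointwise computation to a statement valid on a full neighborhood, so that the hypothesis of Lemma \ref{lem4.3} (``exactly two distinct conformal principal curvatures around a point'') is legitimately met. The algebra itself is short, and the normalization $|B|^2=\tfrac{m-1}{m}$ is precisely what excludes the degenerate possibility $B=-\lambda\,\id$; an equivalent route, slightly heavier to state, is to read the same relation as saying that $B+\lambda\,\id$ has rank at most one wherever $\Phi\neq0$.
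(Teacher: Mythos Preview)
Your proof is correct and follows essentially the same route as the paper: assume $\Phi(p)\neq0$, use \eqref{Dijk} with $D^\lambda_{ijk}=0$ to force $B$ to have exactly the two constant eigenvalues $(m-1)\lambda$ and $-\lambda$ near $p$, and then invoke Lemma~\ref{lem4.3} for the contradiction. The only cosmetic difference is that the paper diagonalizes $B$ at $p$ and picks an index $i_0$ with $\Phi_{i_0}\neq0$, whereas you align $E_1$ with $\Phi$; your version has the mild advantage of making the ``around $p$'' conclusion explicit by running the pointwise computation at every $q\in U$.
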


\begin{proof} If the proposition is not true, then there exists some point $p\in M^m$ such that $\Phi\neq 0$ at $p$ and thus around $p$. Choose an orthonormal frame field $\{E_i\}$ around $p$, such that $B_{ij}(p)=B_i\delta_{ij}$. By the assumption, there exists some $i_0$ such that $\Phi_{i_0}(p)\neq 0$. Then $\Phi_{i_0}\neq 0$ around the point $p$. Since $D^\lambda$ is parallel, we derive from \eqref{Dijk} that
\begin{equation}
\label{BPhi}
(B_i+\lambda)(\delta_{ij}\Phi_{i_0}-\delta_{ii_0}\Phi_j)=0.
\end{equation}

For any $i\neq i_0$, put $j=i$ in \eqref{BPhi}. It then follows that $B_i(p)+\lambda=0$ for each $i\neq i_0$. This proves that, around the point $p$, $x$ has exactly two distinct conformal principal curvatures with one of which being simple. It follows from Lemma \ref{lem4.3} that $\Phi\equiv 0$ around $p$, contradicting to the assumption that $\Phi(p)\neq 0$.
\end{proof}

In what follows, we use the orthonormal frame field $\{E_i\}$ such that \eqref{Dij} holds.

\begin{lem}\label{lemBij}  If $D^\lambda$ is parallel, then $B_{ij}=0$ whenever $D^\lambda_i\neq D^\lambda_j$.
\end{lem}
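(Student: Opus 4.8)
The plan is to combine the already-proved vanishing of the conformal form with the integrability condition \eqref{Phiij}. Since $D^\lambda$ is parallel, Proposition \ref{prop4.1} guarantees $\Phi\equiv 0$ on $M$, so in particular every $\Phi_k=0$ and the left-hand side of \eqref{Phiij} vanishes identically. Reading \eqref{Phiij} as a matrix identity and using the symmetry $A_{ij}=A_{ji}$, I would rewrite its right-hand side as $\sum_k(B_{ik}A_{kj}-A_{ik}B_{kj})=(BA)_{ij}-(AB)_{ij}$; its vanishing then says precisely that the symmetric tensors $A$ and $B$ commute, $[A,B]=0$.

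Next I would translate this commutation into a statement about the para-Blaschke tensor itself. Writing $A=D^\lambda-\lambda B$ and noting that $B$ trivially commutes with $\lambda B$, we obtain
\begin{equation}
[B,D^\lambda]=[B,A+\lambda B]=[B,A]=0, \nnm
\end{equation}
so $B$ commutes with $D^\lambda$. Finally I would read off the conclusion in the frame $\{E_i\}$ that diagonalizes $D^\lambda$, in which \eqref{Dij} holds, i.e. $D^\lambda_{ij}=D^\lambda_i\delta_{ij}$. In this frame the $(i,j)$ entry of $[B,D^\lambda]$ is exactly $B_{ij}(D^\lambda_j-D^\lambda_i)$, and its vanishing forces $B_{ij}=0$ whenever $D^\lambda_i\neq D^\lambda_j$, which is the assertion of the lemma.

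I do not expect a serious obstacle here: the whole argument rests on recognizing that the antisymmetric part of the Hessian of $\Phi$ recorded in \eqref{Phiij} encodes the commutator $[A,B]$, so that once $\Phi$ has been eliminated this commutator must vanish. The only points demanding a little care are the index bookkeeping with the symmetries $A_{ij}=A_{ji}$ and $B_{ij}=B_{ji}$ when identifying the right-hand side of \eqref{Phiij} with $[B,A]_{ij}$, and the immediate passage from $[A,B]=0$ to $[B,D^\lambda]=0$; neither is delicate, and both reduce to elementary matrix manipulations in the chosen orthonormal frame.
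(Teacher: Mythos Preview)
Your proof is correct and is essentially the same as the paper's: both invoke Proposition~\ref{prop4.1} to get $\Phi\equiv 0$, use \eqref{Phiij} to conclude that $B$ commutes with $A$ (and hence with $D^\lambda=A+\lambda B$), and then evaluate in the frame where $D^\lambda$ is diagonal to obtain $B_{ij}(D^\lambda_j-D^\lambda_i)=0$. The only difference is cosmetic---you phrase the argument via commutators $[A,B]=0\Rightarrow[B,D^\lambda]=0$, whereas the paper substitutes $A=D^\lambda-\lambda B$ directly into the sum and cancels the $\lambda B$ terms.
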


\begin{proof} By Proposition \ref{prop4.1}, $\Phi\equiv 0$. Then from \eqref{Phiij}, it follows that
$$
\sum
B_{ik}D^{\lambda}_{kj}-D^{\lambda}_{ik}B_{kj}=\Phi_{ij}-\Phi_{ji}=0,\quad \forall i,j.
$$
Thus, by \eqref{Dij}, for all $i,j$, $B_{ij}(D^\lambda_j-D^\lambda_i)=0$. It follows that
$B_{ij}=0$ whenever $D^\lambda_i\neq D^\lambda_j$.
\end{proof}

By Lemma \ref{lemBij}, we can choose around any point $p\in M^m$ a local orthonormal frame field $\{E_i\}$ such that both $D^\lambda$ and $B$ are
diagonalized simultaneously.

Let $t$ be the number of  distinct eigenvalues of $D^\lambda$, and $d_1,\cdots,d_t$ be  the distinct eigenvalues of $D^\lambda$. Then under the frame field $\{E_i\}$ chosen above, we can write
\begin{equation}
\label{DiaD}
(D^\lambda_{ij})=\mb{Diag}(\undbc{d_1, \cdots, d_1}_{k_1},
\undbc{d_2, \cdots, d_2}_{k_2}, \cdots, \undbc{d_t, \cdots, d_t}_{k_t}),
\end{equation}
namely,
\begin{equation}
\label{Ddt}
D^\lambda_1=\cdots=D^\lambda_{k_1}=d_1, \cdots,
D^\lambda_{m-k_t+1}=\cdots=D^\lambda_m=d_t.
\end{equation}

\begin{lem}\label{lemBD} If $D^{\lambda}$ is parallel  and $t\geq 3$, then, $B_i=B_j$ whenever $D^\lambda_i=D^\lambda_j$.
\end{lem}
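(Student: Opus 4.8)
Lemma \ref{lemBD} asks us to show that when $t\geq 3$, the conformal principal curvatures $B_i$ are constant on each eigendistribution of $D^\lambda$. The plan is to exploit the two remaining integrability conditions that have not yet been used, namely \eqref{Bijk} and \eqref{Aijk} (equivalently \eqref{Dijk}), together with the structure equations \eqref{Dwij0} coming from the parallelism of $D^\lambda$. Since we already know $\Phi\equiv 0$ by Proposition \ref{prop4.1}, the condition \eqref{Bijk} collapses to $B_{ijk}=B_{ikj}$, so $B$ is a Codazzi tensor with respect to the conformal metric. Moreover, by Lemma \ref{lemBij} we may work in a frame that diagonalizes both $D^\lambda$ and $B$ simultaneously, so $B_{ij}=B_i\delta_{ij}$ identically. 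The goal is then to show $E_k(B_i)=0$ whenever $D^\lambda_k=D^\lambda_i$, and more generally that $B_i$ agrees on each block in \eqref{DiaD}.

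\emph{Key steps.} First I would write out \eqref{bijk} in the simultaneously-diagonalizing frame, obtaining
\begin{equation}
\sum_k B_{ijk}\omega^k = \delta_{ij}\,dB_i - (B_i - B_j)\,\omega^j_i,\nnm
\end{equation}
so that the off-diagonal covariant derivatives satisfy $B_{iik}=E_k(B_i)$ and, for $i\neq j$, $B_{ijk}$ is carried by the connection form $\omega^j_i$. Second, I would invoke \eqref{Dwij0}: since $D^\lambda$ is parallel, $\omega^i_j=0$ whenever $D^\lambda_i\neq D^\lambda_j$. Thus the only surviving connection forms $\omega^j_i$ live \emph{within} a single $D^\lambda$-eigenblock. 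The Codazzi symmetry $B_{ijk}=B_{ikj}$ then becomes the real engine: comparing $B_{iik}$ with $B_{iki}$ for indices $i,k$ lying in \emph{different} $D^\lambda$-blocks forces $E_k(B_i)$ to vanish, because the term that would balance it, carried by $\omega^k_i$, is zero by \eqref{Dwij0}. This already gives $E_k(B_i)=0$ when $D^\lambda_k\neq D^\lambda_i$. Third, to control derivatives \emph{within} a block, I would use the hypothesis $t\geq 3$: given two indices $i,j$ in the same block with a third block available, one writes the Codazzi relation $B_{ijk}=B_{ikj}$ for a direction $k$ chosen in a \emph{third} block $D^\lambda_k\neq D^\lambda_i=D^\lambda_j$. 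Since $\omega^j_i$ may be nonzero inside the block while $\omega^k_i=\omega^k_j=0$ across blocks, expanding both sides yields $(B_i-B_j)\,\omega^j_i(E_k)=0$ for all such $k$; combining this with the Codazzi identities relating $E_i(B_j)$, $E_j(B_i)$ and the intra-block connection coefficients lets one conclude $B_i=B_j$ on the overlap of the blocks.

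\emph{Where the difficulty lies.} The routine part is the bookkeeping of \eqref{bijk} and the vanishing \eqref{Dwij0}; that cleanly kills all cross-block derivatives. The genuine obstacle is the intra-block argument: showing $B_i=B_j$ when $D^\lambda_i=D^\lambda_j$ cannot come from \eqref{Bijk} alone, since within a block the connection forms $\omega^j_i$ are unconstrained and the Codazzi equation is automatically satisfied. This is precisely where the hypothesis $t\geq 3$ must be essential — with only two blocks there is no ``third direction'' $k$ to differentiate against, and indeed Examples \ref{expl3.2} and \ref{expl3.3} (where $t=2$ but $B$ is non-constant on a block) show the conclusion genuinely fails for $t=2$. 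So I would expect the proof to extract, from the existence of a third eigenvalue, a Codazzi relation of the form $E_k(B_i)=(B_i-B_k)\,\omega^i_k(E_i)$ or similar across the block boundary, feed it back through the symmetry of $B_{ijk}$, and run a short computation on the three indices $i,j,k$ to pin down that the value $B_i$ is forced to be the same throughout the $D^\lambda$-block. The careful identification of which connection coefficients survive and which vanish is the crux, and I anticipate it is handled by a two- or three-index case analysis rather than any deep new idea.
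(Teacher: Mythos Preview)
Your approach has a genuine gap: the Codazzi equation \eqref{Bijk} cannot carry the intra-block argument, and the ``third direction'' trick you propose yields no information. Concretely, because $D^\lambda$ is parallel the eigendistributions are parallel and the manifold splits locally as a Riemannian product; hence for $i,j$ in the same $D^\lambda$-block and $k$ in another block, one already has $\omega^j_i(E_k)=0$. So your relation $(B_i-B_j)\,\omega^j_i(E_k)=0$ is $0=0$ and does not force $B_i=B_j$. More generally, within a block the Codazzi identities for $B$ are exactly the Codazzi identities for a symmetric $2$-tensor on the factor $(M_a,g_a)$; they impose no algebraic constraint on the eigenvalues of $B$ restricted to that block. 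This is why your outline stalls at precisely the point you flagged as ``the genuine obstacle.''

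The paper bypasses Codazzi entirely and uses the \emph{Gauss equation} \eqref{Rijkl}. From $\omega^i_j\equiv 0$ for $D^\lambda_i\neq D^\lambda_j$ one gets $d\omega^i_j\equiv 0$, and since all cross-terms $\omega^i_k\wedge\omega^k_j$ vanish as well, the sectional curvature $R_{ijij}$ vanishes for such pairs. Feeding this into \eqref{Rijkl} with $A=D^\lambda-\lambda B$ produces the purely \emph{algebraic} relation
\[
-B_iB_j-\lambda(B_i+B_j)+D^\lambda_i+D^\lambda_j=0\qquad\text{whenever }D^\lambda_i\neq D^\lambda_j.
\]
Now suppose $D^\lambda_i=D^\lambda_j$ but $B_i\neq B_j$; subtracting the above for any $k$ with $D^\lambda_k\neq D^\lambda_i$ gives $(B_i-B_j)(B_k+\lambda)=0$, hence $B_k=-\lambda$ and then $D^\lambda_k=-\lambda^2-D^\lambda_i$ for \emph{every} such $k$. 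That forces $t=2$, contradicting $t\geq 3$. So the hypothesis $t\geq 3$ enters algebraically, not through a Codazzi direction. You should replace your Codazzi-based plan for the intra-block step with this curvature argument.
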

\begin{proof} Under the local
orthonormal frame field $\{E_i\}$ chosen above, both \eqref{DiaD} and $B_{ij}=B_i\delta_{ij}$ hold.
 By \eqref{Dwij0}, for any $i,j$,  $\omega^i_j\equiv 0$ whenever  $D^\lambda_i\neq D^\lambda_j$.  Thus
$d\omega^i_j\equiv 0$ implying
\begin{equation}
0=B_{ij}^2-B_{ii}B_{jj} +(D^\lambda_{ii}-\lambda
B_{ii})-(D^\lambda_{ij}-\lambda B_{ij})\delta_{ij}
+(D^\lambda_{jj}-\lambda B_{jj})-(D^\lambda_{ij}-\lambda
B_{ij})\delta_{ij}.
\end{equation}
namely,
\begin{equation}
\label{DBiBj}
-B_iB_j-\lambda(B_i+B_j) +D^\lambda_i+D^\lambda_j=0.
\end{equation}
If there exist $i, j$ such that $D^\lambda_i=D^\lambda_j$ and
$B_i\neq B_j$, then for all $k$ satisfying $D^\lambda_k\neq D^\lambda_i$, we have
\begin{equation}
\label{BiBk}
-B_iB_k-\lambda(B_i+B_k) +D^\lambda_i+D^\lambda_k=0,\quad
-B_jB_k-\lambda(B_j+B_k) +D^\lambda_j+D^\lambda_k=0.
\end{equation}
From \eqref{BiBk}, we have $(B_j-B_i)(B_k+\lambda)=0$ which implies
$B_k=-\lambda$. Thus by \eqref{BiBk},
$$D^\lambda_k+\lambda^2=-D^\lambda_i=-D^\lambda_j.$$
This means that
$t=2$. This contradiction finishes the proof.
\end{proof}

Summing up, we have
\begin{cor}\label{corDD} Under the assumptions of Lemma $\ref{lemBD}$,  there
exists an orthonormal frame field $\{E_i\}$ such that
\begin{equation}
\label{DDDB}
D^\lambda_{ij}=D^\lambda_i\delta_{ij},\quad
B_{ij}=B_i\delta_{ij}
\end{equation}
and
\begin{align}
(D^\lambda_{ij})=&\mb{\rm Diag}(\undbc{d_1, \cdots, d_1}_{k_1},
\undbc{d_2, \cdots, d_2}_{k_2}, \cdots, \undbc{d_t, \cdots, d_t}_{k_t}),\nnm\\
(B_{ij})=&\mb{\rm Diag}(\undbc{b_1,\cdots, b_1}_{k_1},
\undbc{b_2, \cdots, b_2}_{k_2}, \cdots, \undbc{b_t, \cdots,
b_t}_{k_t}),\label{DDiaB}
\end{align}
where $b_1, \cdots, b_t$ are not necessarily different from each other.
\end{cor}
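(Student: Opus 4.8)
The plan is to assemble the corollary directly from the two preceding lemmas together with the simultaneous-diagonalization observation already recorded in the text just before the statement. The core point is that Lemma \ref{lemBij} forces $B$ to respect the eigenspace decomposition of $D^\lambda$: in any orthonormal frame diagonalizing $D^\lambda$ as in \eqref{Dij}, the relation $B_{ij}=0$ whenever $D^\lambda_i\neq D^\lambda_j$ says exactly that $B$ is block-diagonal, each block corresponding to one distinct eigenvalue $d_s$ of $D^\lambda$ with its multiplicity $k_s$.

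First I would fix the eigenspace decomposition of $D^\lambda$ at $p$ into the $t$ distinct-eigenvalue blocks of dimensions $k_1,\dots,k_t$. On each such block $D^\lambda$ acts as the scalar $d_s$, so any orthonormal rotation within a single block leaves the diagonal form \eqref{DiaD} of $D^\lambda$ unchanged. Since by Lemma \ref{lemBij} the symmetric tensor $B$ carries each block into itself, I can diagonalize $B$ separately on each block; choosing the basis accordingly yields a single orthonormal frame $\{E_i\}$ in which both equalities of \eqref{DDDB} hold and $(D^\lambda_{ij})$ retains the stated block form.

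Next, to upgrade this to \eqref{DDiaB}, I would invoke Lemma \ref{lemBD}: under the hypothesis $t\geq 3$ it gives $B_i=B_j$ whenever $D^\lambda_i=D^\lambda_j$, i.e.\ the diagonal entries of $B$ are constant on each $D^\lambda$-block. Denoting by $b_s$ the common value of $B_i$ for the indices $i$ in the $s$-th block produces precisely the block-constant expression for $(B_{ij})$, with the $b_s$ a priori unrelated across different blocks (hence ``not necessarily different from each other'').

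There is no serious obstacle here; the statement is essentially a bookkeeping consequence of Lemmas \ref{lemBij} and \ref{lemBD}. The only point demanding a little care is the order of operations in the first step: one must diagonalize $B$ \emph{within} blocks rather than globally, so as not to disturb the scalar action of $D^\lambda$ on each eigenspace---but this is automatic, since $D^\lambda$ is a multiple of the identity on each eigenspace and hence commutes with every rotation fixing that eigenspace. The role of the hypothesis $t\geq 3$ is confined to Lemma \ref{lemBD}, and it cannot be dropped, since for $t=2$ the two blocks need not carry a constant value of $B$.
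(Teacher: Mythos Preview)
Your proposal is correct and matches the paper's own treatment: the paper gives no separate proof of this corollary, introducing it with ``Summing up, we have'' as an immediate consequence of Lemma~\ref{lemBij} (simultaneous diagonalization, already recorded in the paragraph preceding the corollary) together with Lemma~\ref{lemBD} (constancy of the $B_i$ on each $D^\lambda$-eigenblock). Your observation that the within-block diagonalization of $B$ leaves $D^\lambda$ untouched is exactly the content behind the paper's earlier sentence ``we can choose around any point $p\in M^m$ a local orthonormal frame field $\{E_i\}$ such that both $D^\lambda$ and $B$ are diagonalized simultaneously.''
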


\begin{lem}\label{lemBC} Under the assumptions of Lemma $\ref{lemBD}$, all the conformal principal curvatures $b_1,\cdots,b_t$ of $x$ are constant, namely, $x$ is conformal isoparametric.
\end{lem}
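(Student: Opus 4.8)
The plan is to exploit the quadratic relations among the conformal principal curvatures $b_\alpha$ coming from the flatness of the mixed connection forms, together with the constancy of the para-Blaschke eigenvalues $d_\alpha$ and the hypothesis $t\geq 3$. First I would invoke Corollary \ref{corDD} to fix, around the given point, the orthonormal frame in which $D^\lambda$ and $B$ are simultaneously diagonal with the block structure \eqref{DDiaB}; here $b_\alpha$ is the common value of $B_i$ on the $\alpha$-th block and $d_\alpha$ is the (constant) eigenvalue of $D^\lambda$ there. For indices $i,j$ lying in two different $D^\lambda$-blocks one has $D^\lambda_i\neq D^\lambda_j$, so relation \eqref{DBiBj} applies and, at the block level, reads
$$
-b_\alpha b_\beta-\lambda(b_\alpha+b_\beta)+d_\alpha+d_\beta=0,\qquad \alpha\neq\beta.
$$

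The decisive simplification is the substitution $f_\alpha:=b_\alpha+\lambda$, under which the displayed identity becomes
$$
f_\alpha f_\beta=\lambda^2+d_\alpha+d_\beta=:c_{\alpha\beta},\qquad \alpha\neq\beta,
$$
with every $c_{\alpha\beta}$ a constant. Thus each pairwise product of the functions $f_\alpha$ is constant; if I can show that each $f_\alpha$ is \emph{individually} constant, then $b_\alpha=f_\alpha-\lambda$ is constant and $x$ is conformal isoparametric. For any three distinct blocks $\alpha,\beta,\gamma$ (which exist because $t\geq 3$) this would follow at once from the identity $f_\alpha^2=c_{\alpha\beta}c_{\alpha\gamma}/c_{\beta\gamma}$, provided the denominator does not vanish.

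The main obstacle is therefore to rule out $c_{\alpha\beta}=0$ for some pair, and I expect to do this by contradiction. Suppose $c_{\alpha\beta}=0$ and pick a third index $\gamma$. Since the $d$'s are mutually distinct, the differences $c_{\alpha\beta}-c_{\alpha\gamma}=d_\beta-d_\gamma$ and $c_{\alpha\beta}-c_{\beta\gamma}=d_\alpha-d_\gamma$ are nonzero, so both $c_{\alpha\gamma}\neq 0$ and $c_{\beta\gamma}\neq 0$. A zero of $f_\gamma$ would force $c_{\alpha\gamma}=f_\alpha f_\gamma=0$, so $f_\gamma$ must be nowhere vanishing; hence $f_\alpha=c_{\alpha\gamma}/f_\gamma$ and $f_\beta=c_{\beta\gamma}/f_\gamma$ are well defined and yield $c_{\alpha\beta}=f_\alpha f_\beta=c_{\alpha\gamma}c_{\beta\gamma}/f_\gamma^2\neq 0$, contradicting $c_{\alpha\beta}=0$.

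Consequently every $c_{\alpha\beta}$ is a nonzero constant, so $f_\alpha^2=c_{\alpha\beta}c_{\alpha\gamma}/c_{\beta\gamma}$ is a positive constant for each $\alpha$. Since $f_\alpha$ is continuous on the (connected) coordinate neighbourhood and its square is a positive constant, $f_\alpha$ itself is constant. Therefore all the conformal principal curvatures $b_\alpha=f_\alpha-\lambda$ are constant, which is exactly the assertion that $x$ is conformal isoparametric, completing the proof.
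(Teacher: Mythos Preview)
Your argument is correct and takes a genuinely different route from the paper's own proof. The paper proceeds by differentiation: it first uses the vanishing of the mixed connection forms \eqref{Dwij0} to show that $E_k(b_\alpha)=0$ whenever $E_k$ lies outside the $\alpha$-th block, then differentiates the quadratic relation \eqref{DBiBj} along directions inside the $\alpha$-th block to obtain $E_k(b_1)(B_j+\lambda)=0$, and finally splits $M^m$ into the open set $U=\{B_j\neq-\lambda\text{ for some }j>k_1\}$ and its complement, treating each piece separately and invoking connectedness.

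Your approach bypasses all of this analytic machinery by exploiting \eqref{DBiBj} purely algebraically via the substitution $f_\alpha=b_\alpha+\lambda$, which turns the relations into the very clean system $f_\alpha f_\beta=c_{\alpha\beta}$ with constant right-hand sides. The key observation that $c_{\alpha\beta}-c_{\alpha\gamma}=d_\beta-d_\gamma\neq 0$ (since the $d$'s are distinct) is exactly what rules out the degenerate case $c_{\alpha\beta}=0$ and makes the three-index elimination $f_\alpha^2=c_{\alpha\beta}c_{\alpha\gamma}/c_{\beta\gamma}$ go through. This is shorter and more transparent than the paper's argument, and it shows clearly where the hypothesis $t\geq 3$ enters. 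The paper's route, on the other hand, yields as a byproduct the information that each $b_\alpha$ is constant along the leaves of the complementary foliation (equations \eqref{Ekb} and the analogous ones), which feeds directly into the proof of Corollary \ref{corBP}; your purely algebraic argument does not produce this intermediate fact, though of course once the $b_\alpha$ are known to be constant it is trivial.
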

\begin{proof} Without loss of generality, it suffices to show that $b_1$ is constant.
By the assumption and Corollary \ref{corDD}, we can choose a frame field $\{E_i\}$ in a neighborhood  of any point such that \eqref{DiaD}, \eqref{DDDB} and \eqref{DDiaB} hold. Note that for $1\leq i\leq k_1$ and
$j\geq k_1+1$, by \eqref{Dwij0},
\begin{equation}
\label{inoj}
\sum B_{ijk}\omega^k=dB_{ij}-\sum B_{kj}\omega^k_i-\sum
B_{ik}\omega^k_j=0.
\end{equation}
Therefore, $B_{ijk}=0$ for all $k$. By the symmetry of $B_{ijk}$, we see that
$B_{ijk}=0$, in case that any two of $i,j,k$ are less than or equal to
$k_1$ with the other  larger than $k_1$, or any one of  $i,j,k$ is less
than or equal to $k_1$ with the other two larger than $k_1$. Hence,
for any $i,j$ satisfying $1\leq i,j\leq k_1$,
\begin{equation}
\label{ieqj}
\sum_{k=1}^{k_1} B_{ijk}\omega^k=dB_{ij}-\sum
B_{kj}\omega^k_i-\sum
B_{ik}\omega^k_j=dB_i\delta_{ij}-B_j\omega^j_i-B_i\omega^i_j.
\end{equation}
We infer
\begin{equation}
\sum_{k=1}^{k_1} B_{iik}\omega^k=db_1,
\end{equation}
which yields
\begin{equation}
\label{Ekb}
E_k(b_1)=0,\quad k_1+1\leq k\leq m.
\end{equation}
Similarly,
\begin{equation}
E_i(B_j)=0,\quad 1\leq i\leq k_1,\ k_1+1\leq j\leq m.
\end{equation}

On the other hand, from \eqref{DBiBj} we have
\begin{equation}
-b_1B_j-\lambda(b_1+B_j)+d_1+D^\lambda_j=0, \quad k_1+1\leq j\leq m.
\end{equation}
We derive, for $1\leq k\leq k_1$,
\begin{equation}
\label{Ekb1}
E_k(b_1)(B_j+\lambda)=0,\quad 1\leq k\leq k_1,\ k_1+1\leq j\leq m.
\end{equation}

Define $U=\{q\in M^m; B_j(q)\neq-\lambda\ \text{for some\ }j\geq k_1+1\}$.
For any point $p\in U$, we can find some $j\geq k_1+1$ such that
$B_j\neq -\lambda$ around $p$. Therefore by \eqref{Ekb1}, $E_k(b_1)=0$ for
$1\leq k\leq k_1$ which with \eqref{Ekb} implies that $b_1$ is a constant.
This proves that $b_1$ is constant on the closure $\ol U$
of $U$.

On the other hand, for any $p\not\in\ol U$, we have
$B_{k_1+1}=\cdots=B_m=-\lambda$ around $p$. By \eqref{DDiaB}, $B$ has
exactly two distinct eigenvalues at $p$. Thus, if $M^m\bsl \ol U$ is a nonempty set,
from \eqref{trB}, we know that $b_1$ is a constant in  $M^m\bsl \ol U$.
Since  $M^m$ is connected, we have  that $b_1$ is constant identically on $M^m$.
\end{proof}

\begin{cor}\label{corBP} Under the assumptions of Lemma $\ref{lemBD}$, $B$ is parallel and $t$ must be $3$.
\end{cor}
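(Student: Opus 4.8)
The plan is to prove the two assertions separately, working throughout in the frame field furnished by Corollary \ref{corDD}, in which $D^\lambda_{ij}=D^\lambda_i\delta_{ij}$ and $B_{ij}=B_i\delta_{ij}$ are simultaneously diagonal and share the common block structure \eqref{DDiaB}; I write $D^\lambda_i=d_\alpha$, $B_i=b_\alpha$ for $i$ in the $\alpha$-th block, $\alpha\in\{1,\dots,t\}$.

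For the parallelism of $B$ I would compute the covariant derivatives $B_{ijk}$ directly from \eqref{bijk}. By Lemma \ref{lemBC} every conformal principal curvature $b_\alpha$ is constant, so $dB_{ij}=0$ identically. If $D^\lambda_i\neq D^\lambda_j$, then $\omega^i_j=0$ by \eqref{Dwij0}, which annihilates the two remaining terms in \eqref{bijk} and forces $B_{ijk}=0$. If instead $D^\lambda_i=D^\lambda_j$ (so $i,j$ lie in one block), then $B_i=B_j$ by Lemma \ref{lemBD}, and the two connection terms $-B_j\omega^j_i-B_i\omega^i_j$ cancel by the antisymmetry $\omega^i_j=-\omega^j_i$; again $B_{ijk}=0$. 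Hence all components of the covariant derivative vanish and $B$ is parallel.

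For $t=3$ I would exploit the algebraic identity \eqref{DBiBj}, valid for every pair with $D^\lambda_i\neq D^\lambda_j$, i.e. for every pair of distinct blocks $\alpha\neq\beta$. Setting $c_\alpha:=b_\alpha+\lambda$ turns it into the symmetric relation
$$d_\alpha+d_\beta+\lambda^2=c_\alpha c_\beta,\qquad \alpha\neq\beta.$$
First I would show the $c_\alpha$ are pairwise distinct: if $c_\alpha=c_\beta$ for some $\alpha\neq\beta$, pick a third block $\gamma$ (which exists since $t\geq 3$) and compare $d_\alpha+d_\gamma+\lambda^2=c_\alpha c_\gamma$ with $d_\beta+d_\gamma+\lambda^2=c_\beta c_\gamma$; the equal right-hand sides force $d_\alpha=d_\beta$, contradicting the distinctness of the eigenvalues of $D^\lambda$.

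With the $c_\alpha$ distinct, subtracting the relations for the pairs $(\alpha,\beta)$ and $(\alpha,\gamma)$ gives $d_\beta-d_\gamma=c_\alpha(c_\beta-c_\gamma)$ for any three distinct blocks. If $t\geq 4$, I would fix $\beta,\gamma$ and let $\alpha$ run over two further distinct indices $\alpha_1,\alpha_2$; the two resulting identities yield $(c_{\alpha_1}-c_{\alpha_2})(c_\beta-c_\gamma)=0$, which is impossible since all $c_\alpha$ are distinct. Therefore $t\leq 3$, and together with the hypothesis $t\geq 3$ this gives $t=3$. The main obstacle is this second part: one must organize the system \eqref{DBiBj} so that the right step is done in the right order, and in particular the proof that the $c_\alpha$ cannot coincide genuinely consumes the hypothesis $t\geq 3$ and must precede the counting argument that excludes $t\geq 4$.
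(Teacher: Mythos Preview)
Your proof is correct and follows essentially the same route as the paper. For the parallelism of $B$ the paper simply cites the computations \eqref{inoj} and \eqref{ieqj} already carried out in the proof of Lemma~\ref{lemBC}, which is exactly your diagonal-frame argument; for $t=3$ the paper also works from \eqref{DBiBj} with four indices in distinct blocks to reach a factored contradiction, whereas your substitution $c_\alpha=b_\alpha+\lambda$ (turning \eqref{DBiBj} into $c_\alpha c_\beta=d_\alpha+d_\beta+\lambda^2$) and your preliminary step showing the $c_\alpha$ are pairwise distinct make the same argument a bit more transparent, but the content is the same.
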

\begin{proof} From \eqref{inoj} and \eqref{ieqj}, we infer that $B$ is parallel.

If $t>3$, then there exist at least four indices $i_1,i_2,i_3,i_4$,
such that
$D^\lambda_{i_1},D^\lambda_{i_2},D^\lambda_{i_3},D^\lambda_{i_4}$
are distinct from each other. Then we have from \eqref{DBiBj} that
\begin{equation*}
\begin{aligned}
&-B_{i_1}B_{i_2}-\lambda(B_{i_1}+B_{i_2})
+D^\lambda_{i_1}+D^\lambda_{i_2}=0,\quad
-B_{i_3}B_{i_4}-\lambda(B_{i_3}+B_{i_4})+D^\lambda_{i_3}+D^\lambda_{i_4}=0,\\
&-B_{i_1}B_{i_3}-\lambda(B_{i_1}+B_{i_3})
+D^\lambda_{i_1}+D^\lambda_{i_3}=0,\quad
-B_{i_2}B_{i_4}-\lambda(B_{i_2}+B_{i_4})+D^\lambda_{i_2}+D^\lambda_{i_4}=0.
\end{aligned}
\end{equation*}
It follows that
$(D^\lambda_{i_1}-D^\lambda_{i_4})(D^\lambda_{i_2}-D^\lambda_{i_3})=0$.
It is a contradiction proving that $t$ must be $3$.
\end{proof}

\begin{lem}\label{lemBnop}If $D^\lambda$ is parallel and
$B$ is not parallel, then one of the following cases holds:

\begin{enumerate}

\item  $t=1$ and $D^\lambda$ is proportional to the metric $g$;

\item  $t=2$, $d_1+d_2=-\lambda^2$ and $B_i=-\lambda$ hold either for all
$1\leq i\leq k_1$, or for all $k_1+1\leq i\leq m$.
\end{enumerate}
\end{lem}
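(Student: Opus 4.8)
The plan is to analyze the consequences of $B$ being non-parallel, using the simultaneous diagonalization and the key quadratic relation \eqref{DBiBj} that holds between eigenvalue pairs sharing the "off-diagonal vanishing" property. The strategy hinges on a dichotomy on the number $t$ of distinct para-Blaschke eigenvalues. By Corollary~\ref{corBP}, if $t\geq 3$ then $B$ is automatically parallel; so the hypothesis that $B$ is \emph{not} parallel forces $t\leq 2$. This immediately restricts us to the two cases in the conclusion, and the task reduces to identifying the precise structural constraints in each.

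First I would treat $t=1$. Here $D^\lambda$ has a single eigenvalue $d_1$, so $(D^\lambda_{ij})=d_1\,\delta_{ij}$, which says exactly that $D^\lambda=d_1\,g$ is proportional to the conformal metric. This is case~(1), and nothing further is needed. Note that when $t=1$ the relation $\omega^i_j=0$ from \eqref{Dwij0} gives no information (all eigenvalues are equal), so $B$ genuinely can fail to be parallel here—consistent with the statement offering no extra constraint in this case.

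Next I would treat $t=2$, the substantive case. With eigenvalues $d_1\neq d_2$ of multiplicities $k_1$ and $m-k_1$, Lemma~\ref{lemBij} forces $B_{ij}=0$ across the two eigenspaces, so $B$ is block-diagonal and we may take the frame diagonalizing both $D^\lambda$ and $B$ as in Corollary~\ref{corDD}. The relation \eqref{DBiBj}, applied to any index $i$ in the first block and $j$ in the second (where $\omega^i_j=0$ holds by \eqref{Dwij0}), reads
\begin{equation}
-B_iB_j-\lambda(B_i+B_j)+d_1+d_2=0,\quad 1\le i\le k_1,\ k_1+1\le j\le m.
\nnm
\end{equation}
The claim is that $B$ being non-parallel forces $d_1+d_2=-\lambda^2$ together with $B_i=-\lambda$ holding throughout one of the two blocks. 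To see why non-parallelism is decisive, I would argue as in the proof of Lemma~\ref{lemBC}: if within each block all the $B_i+\lambda$ were to behave rigidly, the same covariant-derivative computations \eqref{inoj}--\eqref{ieqj} that proved $B$ parallel when $t\geq 3$ would again yield parallelism of $B$. Thus non-parallelism must manifest as some eigenvalue $B_i$ in a block equalling $-\lambda$, i.e.\ $B_i+\lambda=0$. Substituting $B_i=-\lambda$ into the displayed relation collapses it to $d_1+d_2=-\lambda^2$, and one checks this then propagates $B_j=-\lambda$ across the entire complementary block.

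The main obstacle I anticipate is the implication \emph{non-parallel $\Rightarrow$ $B_i=-\lambda$ on a full block}: I must rule out the possibility that $B$ is non-parallel merely because some $E_k(b_1)\neq0$ while every $B_j+\lambda\neq0$. The resolution is exactly the derivative identity $E_k(b_1)(B_j+\lambda)=0$ from \eqref{Ekb1}, which is available here too: if $B_j\neq-\lambda$ for some $j$ in the opposite block, then all relevant directional derivatives of $b_1$ vanish, and combined with \eqref{Ekb}-type relations this forces $b_1$—and symmetrically $b_2$—to be constant, hence $B$ parallel by \eqref{inoj}--\eqref{ieqj}, a contradiction. So non-parallelism compels $B_j=-\lambda$ for all $j$ in one block, which is precisely the stated conclusion; the quadratic relation then delivers $d_1+d_2=-\lambda^2$. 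I would present this as a clean contrapositive: assuming neither $B_i\equiv-\lambda$ on the first block nor on the second, derive constancy of both $b_1,b_2$ and thus parallelism of $B$.
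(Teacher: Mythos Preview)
Your overall strategy is right: Corollary~\ref{corBP} forces $t\le 2$, and $t=1$ is immediate. The gap is in your handling of $t=2$. You invoke Corollary~\ref{corDD} and then the derivative identities \eqref{Ekb}, \eqref{Ekb1}, \eqref{inoj}--\eqref{ieqj} from the proof of Lemma~\ref{lemBC}, but all of that machinery is stated and proved \emph{under the hypothesis $t\ge 3$}. The crucial content of Corollary~\ref{corDD} is not merely simultaneous diagonalization (which Lemma~\ref{lemBij} alone gives you) but the assertion that $B$ has a \emph{single} eigenvalue $b_s$ on each $D^\lambda$-eigenspace; this comes from Lemma~\ref{lemBD}, which genuinely needs $t\ge 3$. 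When $t=2$ you have no a~priori reason for the $B$-eigenvalues to be constant along a $D^\lambda$-block, so the symbols $b_1,b_2$ you differentiate are not well-defined, and the equations \eqref{Ekb}, \eqref{Ekb1} do not make sense. Your sentence ``non-parallelism must manifest as some eigenvalue $B_i$ in a block equalling $-\lambda$'' is therefore unjustified, and the subsequent ``propagation'' claim is also off (substituting a single $B_i=-\lambda$ into \eqref{DBiBj} yields $d_1+d_2=-\lambda^2$ but says nothing about the other $B_j$).

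The paper's argument avoids all of this by a purely algebraic step you are missing. Working at a point $p$ (with $B$ diagonalized there), suppose for contradiction that some $B_{i_0}\neq-\lambda$ in block~1 and some $B_{j_0}\neq-\lambda$ in block~2. Subtracting the instance of \eqref{DBiBj} for $(i_0,j_0)$ from that for $(i,j_0)$ gives $(B_i-B_{i_0})(B_{j_0}+\lambda)=0$, hence $B_i=B_{i_0}$ for every $i$ in block~1; symmetrically $B_j=B_{j_0}$ on block~2. Thus $B$ has exactly two distinct conformal principal curvatures near $p$, and Lemma~\ref{lem4.3} applies directly to give $B$ parallel---the desired contradiction. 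No derivative identities or constancy arguments from Lemma~\ref{lemBC} are needed. Once the contrapositive is established, plugging $B_i=-\lambda$ (on whichever block) into \eqref{DBiBj} yields $d_1+d_2=-\lambda^2$. You should replace your derivative-based paragraph with this subtraction argument.
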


\begin{proof} From Corollary \ref{corBP} it follows that $t\leq 2$. So it suffices to only consider the case
that $t=2$.

For any point $p\in M^m$, we can find an orthonormal frame field $\{E_i\}$ such that \eqref{Dij} holds around $p$ and $B_{ij}(p)=B_i\delta_{ij}$.
By \eqref{Dwij0}
\begin{equation}
\label{Dwij1}
\omega^i_j=0,\quad 1\leq i\leq k_1,\quad k_1+1\leq j\leq m,
\end{equation}
hold. By making use of the same assertion as \eqref{DBiBj}, we have
\begin{equation}
\label{DBiBj1}
-B_iB_j-\lambda(B_i+B_j) +D^\lambda_i+D^\lambda_j=0,\quad 1\leq i\leq
k_1,\quad k_1+1\leq j\leq m.
\end{equation}
If there exist $i_0,j_0$ with $1\leq i_0\leq k_1$, $k_1+1\leq j_0\leq m$ such that $B_{i_0}\neq -\lambda$
and $B_{j_0}\neq -\lambda$, then they are different from $-\lambda$ around the point $p$. It follows that, for any $i$, $1\leq i\leq k_1$,
$$
-B_{i_0}B_{j_0}-\lambda(B_{i_0}+B_{j_0})+D^\lambda_{i_0}+D^\lambda_{j_0}=0,\quad
-B_iB_{j_0}-\lambda(B_i+B_{j_0})+D^\lambda_i+D^\lambda_{j_0}=0.
$$
Thus,  $(B_i-B_{i_0})(B_{j_0}+\lambda)=0$. We obtain
\begin{equation}
B_i=B_{i_0},\quad 1\leq i\leq k_1.
\end{equation}
Similarly,
\begin{equation}
B_j=B_{j_0},\quad k_1+1\leq j\leq m.
\end{equation}
It follows that there are exactly two distinct conformal principal curvatures around point $p$. From \eqref{trB} we know that conformal principal curvatures $B_i$ are constant. By Lemma \ref{lem4.3}, $B$ is parallel around point $p$ and, by the arbitrariness, $B$ is parallel everywhere. This is indeed a contradiction. Therefore it must holds that either $B_i=-\lambda$ for $i$, $1\leq i\leq k_1$, or $B_j=-\lambda$ for  $j$, $k_1+1\leq j\leq m$. Thus by \eqref{DBiBj1}, $d_1+d_2=-\lambda^2$.
\end{proof}

Now we are in a position to complete the proof of our main theorem (Theorem \ref{main}).

By Theorem \ref{nie1} and Theorem \ref{nie4}, to prove the main theorem, we need to show that if $x$ does not have parallel conformal second fundamental form and the number $t$ of the distinct eigenvalues of $D^\lambda$ is
larger than $1$, then $x$ must be locally conformal equivalent to one of the hypersurfaces given in Examples 3.2 and 3.3.

If $t\geq 3$, then by Corollary \ref{corBP}, the conformal second fundamental form is parallel. Hence, $t=2$. Without
loss of generality, we can assume, by Lemma \ref{lemBnop}, that
\begin{equation}
\label{d2B}
t=2,\quad d_1=d,\quad d_2=-\lambda^2-d,\quad
B_{K+1}=\cdots=B_m=-\lambda,
\end{equation}
where $K=k_1$. Since the conformal second fundamental form $B$ of $x$ is not parallel, the number of distinct conformal principal curvatures must be larger than $2$ (see Lemma \ref{lem4.3}). It follows easily that $m\geq 3$. Because $D^\lambda$ is parallel, the tangent bundle $TM^m$ has a decomposition $TM^m=V_1\oplus V_2$, where $V_1$ and
$V_2$ are eigenspaces of $D^\lambda$ corresponding to eigenvalues $d_1=d$ and $d_2=-\lambda^2-d$, respectively.

Let $\{E_i,1\leq i\leq K\}$, $\{E_j,K+1\leq j\leq m\}$ be orthonormal frame fields for subbundles $V_1$ and $V_2$, respectively. Then $\{E_i,1\leq i\leq m\}$ is an orthonormal frame field on $M^m$ with respect to the conformal metric $g$. On the other hand, Equation \eqref{Dwij1} implies that both $V_1$ and $V_2$ are integrable, and thus the Riemannian manifold $(M^m,g)$ can
be decomposed locally into a direct product of two Riemannian manifolds $(M_1,g_1)$ and $(M_2,g_2)$, that is,
\begin{equation}
(M,g)=(M_1,g_1)\times (M_2,g_2).
\end{equation}
It follows from \eqref{Rijkl} and \eqref{Ddt} that the Riemannian curvature tensors of $(M_1,g_1)$ and $(M_2,g_2)$ have respectively the following components:
\begin{align}
R_{ijkl}=&(2d+\lambda^2)
(\delta_{il}\delta_{jk}-\delta_{ik}\delta_{jl})
+(B_{jl}+\lambda\delta_{jl})(B_{ik}+\lambda\delta_{ik})\nnm\\
&\ \ -(B_{il}+\lambda\delta_{il})(B_{jk}+\lambda\delta_{jk}),\qquad 1\leq
i,j,k,l\leq K;\label{DM1R}\\
R_{ijkl}=&-(\lambda^2+2d)
(\delta_{il}\delta_{jk}-\delta_{ik}\delta_{jl}),\quad K+1\leq
i,j,k,l\leq m.\label{DM2R}
\end{align}
Thus $(M_2,g_2)$ is of constant sectional curvature $-(\lambda^2+2d)$. Since $d_1\neq d_2$, equation\eqref{d2B} implies that $-(2d+\lambda^2)\neq 0$.

Next, we consider separately the following two subcases:

Subcase (1): $(2d+\lambda^2)>0$. In this case, set $r=(2d+\lambda^2)^{-\fr12}$, then $(M_2,g_2)$ can be locally identified with $\mathbb{H}^{m-K}\left(-\fr1{r^2}\right)$. Let $$\td y=(\td y_0,\td y_2):\mathbb{H}^{m-K}\left(-\fr1{r^2}\right) \to\bbr^1_1\times\bbr^{m-K}\equiv\bbr^{m-K+1}_1$$ be the canonical embedding. Since
$$h=\sum_{i,j=1}^K(B_{ij}+\lambda\delta_{ij})\omega^i\omega^j$$ is a
Codazzi tensor on $(M_1,g_1)$, it follows from \eqref{DM1R} that there exists a hypersurface
\begin{equation}
\td y_1:(M_1,g_1)\to \mathbb{S}^{K+1}_1(r)\subset\bbr^{K+2}_1,\quad 2\leq K\leq m-1,
\end{equation}
such that
$h$ is its second fundamental form. Clearly, $\td y_1$ has constant scalar curvature $\td S_1$
and constant mean curvature $\td H_1$ as follows:
$$
\td S_1=\fr{mK(K-1)+(m-1)r^2}{mr^2}-m(m-1)\lambda^2,\quad \td H_1=\fr{m}{K}\lambda.
$$
Note that $M^m$ can be locally identified with $\td M^m=(M_1,g_1)\times \mathbb{H}^{m-K}(-\fr1{r^2})$.

Define $\td x_1={\td y_1}/{\td y_0}$, $\td x_2={\td y_2}/{\td y_0}$
and $\td x=(\td x_1,\td x_2)$. Then, by the discussion in Example
3.2, $\td x:\td M^m\to \mathbb{S}^{m+1}_1$ must be a regular space-like hypersurface with
the given $g$ and $B$ as its conformal metric and conformal second fundamental
form, respectively. Therefore, by Theorem \ref{thm2.4}, $x$ is locally conformal equivalent to $\td x$.

Subcase (2): $2d+\lambda^2<0$. In this case, set $r=(-(2d+\lambda^2))^{-\fr12}$,
then $(M_2,g_2)$ can be locally identified with $\mathbb{S}^{m-K}(r)$. Let
$\td y_2:\mathbb{S}^{m-K}(r)\to\bbr^{m-K+1}$ be the canonical embedding. Similarly as above, since $$h=\sum_{i,j=1}^K(B_{ij}+\lambda\delta_{ij})\omega^i\omega^j$$ is a Codazzi
tensor on $(M_1,g_1)$, it follows from \eqref{DM1R} that there exists a hypersurface
$$\td y=(\td y_0,\td y_1):(M_1,g_1)\to
\mathbb{H}^{K+1}_1\left(-\fr1{r^2}\right)\subset\bbr^1_1 \times\bbr^{K+1}_1\equiv\bbr^{K+2}_2,\quad 2\leq
K\leq m-1,$$
which has $h$ as its second fundamental form, and $\td y$ clearly has constant scalar curvature $\td S_1$ and constant mean curvature $\td H_1$ where
$$\td S_1=\fr{-mK(K-1)+(m-1)r^2}{mr^2}-m(m-1)\lambda^2,\quad \td H_1=\fr{m}{K}\lambda,$$
and $M^m$ can be locally identified with $\td M^m=(M_1,g_1)\times
\mathbb{S}^{m-K}\left(r\right)$.

Write $\td y_1=(\td y'_1,\td y''_1)\in \bbr^1_1\times\bbr^{K}\equiv\bbr^{K+1}_1$. Then $$\td y^2_0+\td y'_1{}^2=r^2+\td y''_1\cdot \td y''_1> 0.$$
Hence, without loss of generality, we can assume that $\td y_0\neq 0$. Define $\veps=\sgn(\td y_0)$ and let $\td x_1=\veps{\td y_1}/{\td y_0}$, $\td x_2=\veps{\td y_2}/{\td y_0}$
and $\td x=(\td x_1,\td x_2)$. Then, by the discussion in Example
3.3, $\td x:\td M^m\to \mathbb{S}^{m+1}_1$ is a regular space-like hypersurface
with the given $g$ and $B$ as its conformal metric and conformal second
fundamental form, respectively. Once again we use Theorem \ref{thm2.4} to assure that $x$ is locally conformal equivalent to $\td x$.\hfill $\Box$

\end{document}